\newtheorem{theo}{Theorem}
\newtheorem{defi}{Definition}
\newtheorem{prop}{Proposition}
\newtheorem{coro}{Corollary}
\newtheorem{remark}{Remark}
\newcommand{\bbN}{{\mathbb N}}
\newcommand{\bbR}{{\mathbb R}}
\newcommand{\al}{\alpha}
\newcommand{\p}{\partial}
\newcommand{\be}{\beta}
\newcommand{\G}{\Gamma}
\newcommand{\dd}{{\rm d}}
\begin{document}
\begin{center}\emph{}
\LARGE
\textbf{ The similarity method and explicit solutions for the fractional space one-phase Stefan problems}
\end{center}

                   \begin{center}
                  {\sc Sabrina D. Roscani,  Domingo A. Tarzia and Lucas Venturato\\
 CONICET - Depto. Matem\'atica, FCE, Universidad Austral, Paraguay 1950, S2000FZF Rosario, Argentina \\
(sroscani@austral.edu.ar, dtarzia@austral,edu.ar, lventurato@austral.edu.ar)}
                   \vspace{0.2cm}

       \end{center}
      
\small

\noindent \textbf{Abstract:} 
In this paper we obtain self-similarity solutions for a one-phase one-dimensional  fractional space one-phase Stefan problem in terms of the three parametric Mittag-Leffler function $E_{\al,m,l}(z)$. We consider Dirichlet and Newmann conditions at the fixed face, involving  Caputo fractional space derivatives of order $0<\al<1$. We recover the solution for the classical  one-phase Stefan problem  when the order of the Caputo derivatives approaches one. 
%Finally, we analyze the Newmann problem with a classical heat flux condition at x=0.\\
  
\noindent \textbf{Keywords:} Fractional space Stefan problems, Explicit Solution, Similarity method, Caputo derivative \\

\noindent \textbf{MSC2010:} 26A33, 35C06, 35R11, 35R35, 80A22, \\

\section{Introduction}\label{Sec:Introduction}

This paper deals with a fractional space Stefan problem. More precisely, we consider a  phase-change problem where the heat flux is  modeled through fractional integrals, and the governed equation is a fractional diffusion equation.

Fractional diffusion equations are a wide scope which could be related to different theories, all of them, converging to  the classical diffusion equation which, in a simple one dimensional form can be written as 
\begin{equation}\label{ClDifEq}
u_t(x,t)=u_{xx}(x,t)+ F(x,t), \qquad x\, \in \, \Omega\subset \bbR, \, \, t \, \in \, (0,T).
\end{equation}  
Regarding fractional diffusion equations for Caputo and Riemann-Liouvlle derivatives and its applications, a complete view of the state-of-the-art can be found in \cite{Povstenko}. A rigorous  mathematical analysis is presented in  \cite{LuMaPa-TheFundamentalSolution, Pskhu:2009} and for applications we refer the reader to \cite{Hilfer, FM-libro}.

We will work with the following fractional diffusion equation  where a Caputo derivative on the spatial variable is involved.   

\begin{equation}\label{FDE-q-Cap}
u_t(x,t)=\frac{\p}{\p x}\,^C_0D_x^{\al} u(x,t), \quad x \in \Omega\times(0,T), \,\, \al\, \in \, (0,1). 
\end{equation}
Recall that $^C_0D_x^{\al}$ is the fractional Caputo derivative of order $\al$ respect on the spatial variable given by 

\begin{equation}\label{Caputo}
^C_0D_x^{\al} u(x,t)=\, _0I^{1-\al}[u_x(\cdot,t)](x,t)=\frac{1}{\G(1-\al)}\int_0^x \frac{u_x(p,t)}{(x-p)^\al}\dd p
\end{equation}
and $_0I^{1-\al}$ is the fractional integral of Riemann-Liouville in the spatial variable of order $1-\al$, defined for every summable function $f$ as 
\begin{equation}\label{IntRL} _0I^{\beta}f(x)=\frac{1}{\G(\beta)}\int_0^x f(p)(x-p)^{\beta-1} \dd p,\quad 0<\beta <1.\end{equation}
Henceforth, the parameter $\al$ related to the fractional order will be a real number between 0 and 1 and the subscript $x$ in  fractional integral and derivatives will be omitted in the context of one variable functions as in the previous equality.

It is worth noting that equation \eqref{FDE-q-Cap} is a consolidated model to anomalous diffusion \cite{delCasNe:2006, MeKl:2000, Povstenko} whereas it was proved in 
\cite{BaMe:2018} that the equation 
$$ u_t(x,t)=\,^C_0D_x^{\al+1} u(x,t)   $$
cannot provide a suitable model  for anomalous diffusion. 

The fractional Stefan problem for the one-dimensional time-fractional diffusion equation was recently studied. Different  models are presented in \cite{GeVoMiDa:2013}, \cite{RoBoTa:2018} and \cite{VoFaGa:2013}. A rigorous existence analysis of self-similar solutions was done in \cite{KuRy:2020}, and results related to explicit solutions were established in \cite{RoSa:2013, RoTa:2017-TwoDifferent, RoCaTa:2020}   and references therein. 

Space-fractional Stefan problems were proposed in \cite{Vo:2014} and the literature about this topic is currently emerging.  
Recently, K. Ryszewska provides  in \cite{Rys:2020} the mathematical analysis of a one dimensional, one-phase free boundary problem governed by a space-fractional diffusion equation. In that article, it is proved that  the problem to find a pair $\left\{u,s\right\}$ verifying that 

\begin{equation}\label{Rys-Pb}
\begin{array}{ll}
u_t(x,t)=\frac{\p}{\p x}\, ^C_0D_x^{\al} u(x,t) & 0<x<s(t), 0<t<T,\\
u_x(0,t)=0 &  0<t<T,\\
u(s(t),t)=0 & 0<t<T,\\
u(x,0)=u_0(x) & 0<x<s(0)=b, \\
\dot{s}(t)=-\lim\limits_{x\rightarrow s(t)^-}\,_0^C D_x^{\al} u_x(x,t) & 0<t<T
\end{array}
\end{equation}
 has a unique solution under suitable regularity on the initial condition and the assumption that $b$ is a positive number.\\

In this paper two  similar problems are treated. Let the parabolic domain
$$Q_{s,T}=\left\{ (x,t) \colon \, 0< x< s(t), \, 0<t < T \right\}$$
be. We consider two instantaneous melting fractional space  Stefan problems. The first one addressed  with a Dirichlet condition:   \textit{Find the pair of functions} $u\colon Q_{s,T}\rightarrow \bbR$ and $s\colon [0,T]\rightarrow \bbR$ with sufficiently regularity such that 
\begin{equation}\label{FSSP-al-D}
\begin{array}{ll}
{u}_t(x,t)=\frac{\p}{\p x} ^C_0D_x^{\al} u(x,t) & 0<x<s(t), 0<t<T,\\
u(0,t)=g(t)  &  0<t<T,\\
u(s(t),t)=U_m & 0<t<T,\\
s(0)=0, & \\ 
\dot{s}(t)=-\lim\limits_{x\rightarrow s(t)^-} \,^C_0D_x^{\al} u(x,t) & 0<t<T.
\end{array}
\end{equation}

And the second one addressed with a Neumann condition:
  \textit{Find the pair of functions} $w\colon Q_{s,T}\rightarrow \bbR$ and $r\colon [0,T]\rightarrow \bbR$ with sufficiently regularity such that 
\begin{equation}\label{FSSP-al-N}
\begin{array}{ll}
w_t(x,t)=\frac{\p}{\p x} ^C_0D_x^{\al} w(x,t) & 0<x<r(t), 0<t<T,\\
^C_0D_x^{\al} w(0^+,t)=-h(t)  &  0<t<T,\\
w(s(t),t)=U_m & 0<t<T,\\
r(0)=0, & \\ 
\dot{r}(t)=-\lim\limits_{x\rightarrow r(t)^-} \,^C_0D_x^{\al} w(x,t) & 0<t<T.
\end{array}
\end{equation}

Note that a Neumann condition  \eqref{FSSP-al-N}$-(ii)$ is different than \eqref{Rys-Pb}$-(ii)$ and it will be justified in the next section, where the model is presented.

The structure of the paper is the following: We  derived problems \eqref{FSSP-al-D} and \eqref{FSSP-al-N}   from physical assumptions in Section 2. Then, some useful properties related to the special functions involved in the self-similarity solutions are presented in Section 3. In Section 4, we apply the similarity method in order to obtain a solution as a function of the three-parameter Mittag-Leffler function $E_{\al,m,l}(z)$ and the non-negative property of the function defined by \eqref{defsigma}. In Section 5 and in Section 6 we obtain the unique explicit solution for the fractional space one-phase Stefan problem with a Dirichlet \eqref{FSSP-al-D} and a fractional heat flux condition \eqref{FSSP-al-N} at the fixed face $x=0$ respectively.

\section{The mathematical model for instantaneous phase change}

\noindent Consider an instantaneous phase change problem corresponding to the melting of a  semi--infinite slab  ($0\leq x<\infty$) of  a material, which is initially at the melting temperature  $U_m$, by imposing a temperature or a heat flux condition at the fixed face $x=0$. All the thermophysical parameters are considered to be constants.  \\

The notation related to heat conduction with its corresponding physical dimensions are given in the next table:

\begin{equation}\label{medidas}
\begin{array}{ccc}
\left[u\right] & \textsl{temperature} & \textbf{T}\\
\left[k\right] & \textsl{thermal conductivity} & \frac{{\textbf{m X}}}{{\textbf{Tt}^3}} \\
\left[\rho\right] & \textsl{mass density} &  \frac{{\textbf {m }}}{{\textbf {X}^3}}  \\
\left[c\right] & \textsl{specific heat} &   \frac{\textbf{X}^2 }{\textbf{T}\textbf{t}^2}\\
\left[d\right]=\left[\frac{k}{\rho c}\right] & \textsl{ diffusion coefficient} &  \frac{\textbf{X}^2}{\textbf{t}}\\
\left[l \right]& \textsl{latent heat per  unit  mass} & \frac{\textbf{X}^2 }{\textbf{t}^2}
\end{array}
\end{equation}
 where $\textbf{T}: temperature$, $\textbf{t}: time$, $\textbf{m}:mass$, $\textbf{X}: position$.

Let $u=u(x,t)$ be the temperature and  let $q=q(x,t)$ be the heat flux of the material at position $x$ and time $t$. Let $x=s(t)$ be the function representing the (unknown) position of the free boundary (phase change interface) at time $t$ such that $s(0)=0$. 

Suppose that, at every time $t$   the heat flux at a position $x$ is a generalized weighted sum of the classical fluxes occurring at every position from the initial position to the current one, where the nearest local fluxes are more relevant than the farthest.
That is, we model the heat flux in the slab by the expression
\begin{equation}\label{q-elegido-0}
 q(x,t)=-\nu_\al \frac{1}{\G(1-\al)} \int_0^x k\,u_x(p,t)(x-p)^{-\al} \dd p=  -\nu_\al k\,_0I^{1-\al}_x\frac{\p  u}{\p x}(x,t).
\end{equation}

Equation \eqref{q-elegido-0} can be expressed in terms of Caputo derivatives as follows
\begin{equation}\label{q-elegido}
 q(x,t)=-\nu_\al k \, ^C_0D_x^{\al} u(x,t).
\end{equation}

Note that $k$ is the thermal conductivity whereas the parameter  $\nu_\al$ has been added to preserve the consistency with respect to the units of measure in equation \eqref{q-elegido-0} such that 
\begin{equation}\label{lim-nu-al}
\displaystyle\lim_{\al\nearrow 1}\nu_{\al}=1.
\end{equation}
 
 From the units of measure given in (\ref{medidas}), we have:
 \begin{equation}\label{med-J}
  \left[ q  \right]=\frac{\textbf{m}}{\textbf{t}^3},
 \end{equation}
 then
 \begin{equation}\label{med-IalJ}
 \left[\nu_\al\right] \left[ I^{1-\al}_x k u_x(x,t) \right]=\left[\nu_\al\right] \left[\frac{1}{\G(1-\al)} \int_{0}^x\frac{k u_x(p,t)}{(x-p)^\al}\dd p \right]= \left[\nu_\al\right]
\frac{\textbf{m}}{\textbf{t}^3}\textbf{m}^{1-\al}. 
 \end{equation}
 Therefore $\left[\nu_\al\right]= \textbf{m}^{\al-1}$.

Now, let us derive the two governing equations of the problem.  From the first principle of the thermodynamics, we have that
\begin{equation}\label{cont eq}
\rho c \frac{\p u}{\p t}(x,t)=-\frac{\p q}{\p x}(x,t).
\end{equation}

Then, by replacing \eqref{q-elegido} in the continuity equation \eqref{cont eq}, the governing equation (now with all the physical parameters) becomes

\begin{equation}%\label{gov eq}
\rho c \frac{\p u}{\p t}(x,t)=\nu_\al k \frac{\p }{\p x}D_x^{\al}u(x,t)
\end{equation}
which in terms of the fractional diffusivity constant, defined by 
$$\lambda_\al=\nu_\al \lambda, \qquad \lambda=\frac{k}{\rho c},$$
is expressed as

\begin{equation}\label{gov eq}
\frac{\p u}{\p t}(x,t)=\lambda_\al  \frac{\p }{\p x}D_x^{\al}u(x,t).
\end{equation}

Respect on the interface we are considering a sharp model  where the solid phase is at constant temperature equal to $U_m$.   Then the  Rankine--Hugonoit conditions gives the condition
\begin{equation}\label{RK-eq}
\llbracket \textbf{q}  \rrbracket^s_l   =-\rho l \dot{s}(t).
\end{equation}
where  the double brackets represents the difference between the limits of the fluxes  from the solid phase and the liquid phase and $l$ is the latent heat of fusion by unit of mass. The fractional Stefan condition then, is obtained from  \eqref{q-elegido} and \eqref{RK-eq}  and it is given by

\begin{equation}\label{Fr-St-Cond}
\rho l \dot{s}(t)=- \nu_\al k \lim\limits_{x\rightarrow s(t)^-}(D_x^\alpha u)(x,t), \quad    t\in (0,T),
\end{equation}
which, for simplicity, will be written as 

\begin{equation}\label{Fr-St-Cond}
\rho l \dot{s}(t)=- \nu_\al k D_x^\alpha u(s(t),t), \quad    t\in (0,T).
\end{equation}

Then, by supposing that the melting temperature is given by $ u(s(t),t)=U_m$, we can address the problem with Dirichlet type conditions
\begin{equation}\label{Dir-cond}
u(0,t)=g(t),
\end{equation}
or by considering a Neumann boundary condition at $x=0$ which according to \eqref{q-elegido} it is given by
\begin{equation}\label{Sp-Fr-Flux2}
\lim\limits_{x\rightarrow 0^+}D_x^\al u(x,t)=-h(t)
\end{equation}
 where $g(t)\geq U_m$ for every $t$ and $h$ is a positive function according to the melting  model considered.

Thus, the one-dimensional fractional space one-phase free-boundary problems for Dirichlet and Neumann conditions at $x=0$ are given respectively by the following expressions:

\begin{equation}\label{FSP-D}
 \begin{array}{lll}
(i) & \frac{\partial}{\partial t}u(x,t)-\lambda_\al\frac{\partial}{\partial x}\,^C_0D_x^\alpha u (x,t)=0, & 0<x<s(t),0<t<T,\\
(ii) & u(0,t)=g(t), &  0<t<T,\\
(iii) & u(s(t),t)=U_m, &  0<t<T,\\
(iv) & s(0)=0, &  \\
(v) & \rho l \dot{s}(t)=- \nu_\al k (\,^C_0D_x^\alpha u)(s(t),t), &  0<t<T,
\end{array}
\end{equation}
and 
\begin{equation}\label{FSP-N}
 \begin{array}{lll}
(i) & \frac{\partial}{\partial t}w(x,t)-\lambda_\al \frac{\partial}{\partial x}\,^C_0D_x^\alpha w (x,t)=0, & 0<x<s(t),0<t<T,\\
(ii) & \lim\limits_{x\rightarrow 0^+} \,^C_0D^\al_x w(x,t)=-h(t), &  0<t<T,\\
(iii) & w(s(t),t)=U_m, & 0<t<T,\\
(iv) & s(0)=0, & \\
%-\rho l \dot{s}(t)=\nu_\al k (\,^C_0D_x^\alpha u)(s(t),t), &   t\in (0,T),
(v) & \rho l \dot{s}(t)=-  \nu_\al k(\,^C_0D_x^\alpha w)(s(t),t), &   0<t<T.
\end{array}
\end{equation}

In \cite{Vo:2014} the quasi-stationary case was solved. There, it was shown that the pair 
\begin{equation}\label{Fr-StationarySol}
%u(x,t)=1-\frac{x^\al}{\left[(1+\al)\G(1+\al)\right]^{\frac{\al}{1+\al}}t^{\frac{\al}{1+\al}}}, \qquad s(t)=\left[(1+\al)\G(1+\al)\right]^{\frac{1}{1+\al}}t^{\frac{1}{1+\al}}
u(x,t)=1-\frac{x^\al}{\left[\G(2+\al)\right]^{\frac{\al}{1+\al}}t^{\frac{\al}{1+\al}}}, \qquad s(t)=\left[\G(2+\al)\right]^{\frac{1}{1+\al}}t^{\frac{1}{1+\al}}
\end{equation} 
is a solution to problem

\begin{equation}
 \begin{array}{lll}
(i) & \frac{\partial}{\partial x}\,^C_0D_x^\alpha u (x,t)=0, & 0<x<s(t),0<t<T,\\
(ii) & u(0,t)=1, &  0<t<T,\\
(iii) & u(s(t),t)=0, &  0<t<T,\\
%u(x,0)=u_0(x), & \text{ para } 0<x<s(0)=b,\\
(iv) & \dot{s}(t)=-(\,^C_0D_x^\alpha u)(s(t),t), &   0<t<T.
\end{array}
\end{equation}

\section{Some basics of the fractional calculus involved in this model}

The definitions of fractional integral of Riemann-Liouville and Caputo derivative were given in \eqref{IntRL} and \eqref{Caputo} respectively. Recall that the Riemann-Liouville derivative of order $\al$ is defined for every absolute continuous function $f$ as 

\begin{equation}\label{RieLiou}
^{RL}_0D_x^{\al} f(x)= \frac{d}{dx}\, _0I^{1-\al}f(x)=\frac{1}{\G(1-\al)}\frac{d}{dx}\int_0^x f(p)(x-p)^{-\al} \dd p.
\end{equation}

\begin{prop}\label{propo frac}\cite{Diethelm} The following properties involving the fractional integrals and derivatives of order $\al \in (0,1)$ hold:
\begin{enumerate}
\item \label{RL inv a izq de I} The  \textsl{fractional Riemann--Liouville derivative} is a left inverse operator of the \textsl{fractional Riemann--Liouville integral} of the same order  $\al\in \bbR^+$. If $f \in L^1(a,b)$, then
$$^{RL}_{a}D^{\al}\,_{a}I^{\al}f(x)=f(x)  \quad a.e. \text{in } \, (a,b).$$

\item The fractional  Riemann--Liouville integral, in general,  is not a left inverse operator of the fractional derivative of Riemann--Liouville.\\
 
In particular, we have 
$ _{a}I^{\al}(^{RL}_{a}D^{\al}f)(x)=f(x) - \dfrac{_{a}I^{1-\al}f(a^+)}{\G(\al)(x-a)^{1-\al}} $ for every  $x \in [a,b].$ 

\item\label{caso part I inv de RL} If there exist some function $\phi \, \in L^1(a,b)$ such that $f=\,_aI^\al \phi$, then 
$$_{a}I^{\al}\,^{RL}_{a}D^{\al} f(x)=f(x)  \quad \forall \, x\in [a,b].$$

\item\label{relacion RL-C} If %$n-1<\al\leq n$ and 
 $f\in AC[a,b],$ then  
% $^{RL}_{a}D^{\al}f (x)=\displaystyle\sum_{k=0}^{n-1} \frac{f^{(k)}(a)}{\G(1+k-\al)}(x-a)^{k-\al}+\, ^C_{a} D^{\alpha}f(x)$.\\
%In particular, for $0<\al<1$, we have 
$$ ^{RL}_{a}D^{\al}f (x)=\frac{f(a)}{\G(1-\al)}(x-a)^{-\al}+\, ^C_{a} D^{\alpha}f(x) \quad a.e. \text{in } \, (a,b).$$ 

\item\label{propClave} For every  $f \in AC[a,b]$ such that $_aI^{1-\al}f' \in AC[a,b]$ it holds that 

$$\frac{d}{d x}\,^C_aD^\alpha f(x)=\,_a^{RL}D^\alpha(f')(x), \qquad a.e. \text{ in } (a,b).$$
\end{enumerate}
\end{prop}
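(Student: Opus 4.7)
The backbone of all five items is the semigroup property $\,_aI^{\al}\,_aI^{\beta}f = \,_aI^{\al+\beta}f$ for $\al,\beta>0$, which follows from Fubini's theorem applied to the iterated integral together with the Beta--Gamma identity. Given this, Item~1 is immediate:
$$\,^{RL}_aD^\al\,_aI^\al f(x) \;=\; \frac{d}{dx}\,_aI^{1-\al}\,_aI^{\al}f(x) \;=\; \frac{d}{dx}\,_aI^{1}f(x) \;=\; f(x) \quad \text{a.e.,}$$
by Lebesgue's differentiation theorem. Item~3 follows at once: substituting $f = \,_aI^\al\phi$, Item~1 gives $\,^{RL}_aD^\al f = \phi$, and hence $\,_aI^\al\,^{RL}_aD^\al f = \,_aI^\al\phi = f$. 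Item~5 is a one-liner from the two definitions: since $\,^C_aD^\al f(x) = \,_aI^{1-\al}f'(x)$ and $\,^{RL}_aD^\al g(x) = \frac{d}{dx}\,_aI^{1-\al}g(x)$, it suffices to commute the outer derivative, which is legitimate under the standing hypothesis that $\,_aI^{1-\al}f'$ is absolutely continuous.

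For Item~4, the plan is to start from $\,^{RL}_aD^\al f(x) = \frac{d}{dx}\,_aI^{1-\al}f(x)$ and integrate by parts inside the Riemann--Liouville integral with $u=f(p)$ and $\dd v=(x-p)^{-\al}\dd p$, which is legitimate because $f\in AC[a,b]$, so $f'\in L^1(a,b)$. The boundary contribution is $\frac{f(a)(x-a)^{1-\al}}{\G(2-\al)}$, and the remaining integral equals $\,_aI^{2-\al}f'(x) = \,_aI^{1}\,_aI^{1-\al}f'(x)$ by the semigroup property. Differentiating in $x$ and invoking the fundamental theorem of calculus yields $\frac{f(a)(x-a)^{-\al}}{\G(1-\al)} + \,^C_aD^\al f(x)$, exactly as claimed.

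For Item~2, rather than a direct integration by parts (which runs into the endpoint singularity $(x-p)^{\al-1}$), the clean strategy is to exploit the absolute continuity of $\,_aI^{1-\al}f$. By the fundamental theorem of calculus,
$$\,_aI^{1-\al}f(x) \;=\; \,_aI^{1-\al}f(a^+) \;+\; \,_aI^{1}\,^{RL}_aD^\al f(x),$$
since $\frac{d}{dp}\,_aI^{1-\al}f(p) = \,^{RL}_aD^\al f(p)$ by definition. Applying $\,^{RL}_aD^{1-\al} = \frac{d}{dx}\,_aI^{\al}$ to both sides and using Item~1 (with $1-\al$ in place of $\al$), together with the semigroup identity $\,_aI^{\al}\,_aI^{1} = \,_aI^{\al+1}$ and the fundamental theorem of calculus, gives $f(x) = \frac{\,_aI^{1-\al}f(a^+)}{\G(\al)(x-a)^{1-\al}} + \,_aI^\al\,^{RL}_aD^\al f(x)$, which rearranges to the stated formula. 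The main technical obstacle I anticipate is the careful justification of the trace $\,_aI^{1-\al}f(a^+)$ and of the computation $\,^{RL}_aD^{1-\al}[\,_aI^{1-\al}f(a^+)] = \,_aI^{1-\al}f(a^+)\,(x-a)^{\al-1}/\G(\al)$; this trace is precisely what obstructs a clean left-inverse identity, and Item~3 neutralizes it by placing $f$ in the range of $\,_aI^\al$, which forces the trace to vanish.
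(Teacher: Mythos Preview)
The paper does not supply its own proof of this proposition: it is quoted from \cite{Diethelm} and stated without argument. Your proposal is therefore not comparable to anything in the paper, but as a self-contained justification it is sound. The semigroup identity plus Lebesgue differentiation handles Item~1; Item~3 follows from it; Item~5 is literally the definition once one observes that $\frac{d}{dx}\,^C_aD^\al f = \frac{d}{dx}\,_aI^{1-\al}f' = \,^{RL}_aD^\al(f')$ with no commutation needed (the absolute-continuity hypothesis is only there to guarantee the a.e.\ derivative exists). Your integration-by-parts derivation of Item~4 is the standard one, and your route to Item~2 via the fundamental theorem of calculus applied to $\,_aI^{1-\al}f$ followed by $\,^{RL}_aD^{1-\al}$ is correct and avoids the endpoint singularity cleanly. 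The only implicit assumption worth flagging is that $\,_aI^{1-\al}f \in AC[a,b]$, which is precisely the hypothesis under which Item~2 is normally stated (and under which the trace $\,_aI^{1-\al}f(a^+)$ makes sense); the paper's statement leaves this regularity tacit.
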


\begin{prop} \cite{Samko} The following limits hold:
\begin{enumerate}
\item If we set $_aI^0=Id$, the identity operator, then for every $ f \, \in L^1(a,b)$,   
$$ \displaystyle\lim_{\al\searrow 0}\, _aI^\al f(x) =\,_aI^0f(x)=f(x) \qquad a.e. \text{ in } (a,b). $$

\item  For every  $f \in AC[a,b]$ the limits hold $a. e.$ in $(a,b)$, 
$$ \displaystyle\lim_{\al\nearrow 1}\,_a^{RL}D^\al f(x) = f'(x),\hspace{1cm}  \displaystyle\lim_{\al\searrow 1}\,_a^{RL}D^\al f(x) = f'(x) \hspace{1cm} \text{and} \hspace{1cm} \displaystyle\lim_{\al\nearrow 1}\,_a^CD^\al f(x) = f'(x). $$
If additionally there exists $f'(0^+)$, it holds that 
 $$ \displaystyle\lim_{\al\searrow 1}\,_a^CD^\al f(x) = f'(x)-f'(0^+).$$

\end{enumerate}

\end{prop}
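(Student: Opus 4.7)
The strategy is to identify Part 1 as the core technical fact; the four limits of Part 2 then follow by combining Part 1 with the RL--Caputo bridge of Proposition \ref{propo frac}(\ref{relacion RL-C}) and the identity ${}^C_aD^\alpha f={}_aI^{1-\alpha}(f')$ that comes directly from \eqref{Caputo}.

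For Part 1, the plan is to remove the singular kernel $(x-p)^{\alpha-1}$ by integration by parts. When $f\in AC[a,b]$, using $\alpha(x-p)^{\alpha-1}=-\frac{d}{dp}(x-p)^\alpha$ one rewrites
\[
{}_aI^\alpha f(x)=\frac{1}{\G(\alpha+1)}\left[f(a)(x-a)^\alpha+\int_a^x f'(p)(x-p)^\alpha\,\dd p\right].
\]
Letting $\alpha\searrow 0$, $\G(\alpha+1)\to 1$, $(x-a)^\alpha\to 1$, and the integral tends to $f(x)-f(a)$ by dominated convergence (dominant $|f'(p)|\max\{1,x-a\}$); the two terms recombine to $f(x)$. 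For a general $f\in L^1(a,b)$ I would approximate $f$ in $L^1$ by $AC$ (or $C_c^\infty$) functions and use the uniform bound $\|{}_aI^\alpha\|_{L^1\to L^1}\le(b-a)^\alpha/\G(\alpha+1)$, which stays bounded as $\alpha\searrow 0$; this gives $L^1$-convergence, and a Lebesgue-point argument upgrades it to a.e.\ pointwise convergence.

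For the $\alpha\nearrow 1$ limits in Part 2: since $f'\in L^1(a,b)$ for $f\in AC[a,b]$, Part 1 applied with exponent $1-\alpha\searrow 0$ immediately yields ${}^C_aD^\alpha f={}_aI^{1-\alpha}(f')\to f'$ a.e., and the RL case follows from the bridge
\[
{}^{RL}_aD^\alpha f(x)=\frac{f(a)}{\G(1-\alpha)}(x-a)^{-\alpha}+{}^C_aD^\alpha f(x),
\]
whose boundary term vanishes because $1/\G(1-\alpha)\to 0$. For the two ``$\alpha\searrow 1$'' limits I would invoke the standard extension to $\alpha\in(1,2)$, namely ${}^{RL}_aD^\alpha f=\frac{d^2}{dx^2}{}_aI^{2-\alpha}f$ and ${}^C_aD^\alpha f={}_aI^{2-\alpha}(f'')$, and apply Part 1 with exponent $2-\alpha\nearrow 1$; the extra $-f'(0^+)$ in the Caputo case appears because recovering $f'$ from $f''$ leaves the boundary value at $a=0$ as an additive constant upon integrating.

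The hardest step I expect is the extension of Part 1 from $AC$ to general $f\in L^1$: the integration-by-parts trick fails pointwise, so one must combine the uniform operator bound above, density of smooth functions in $L^1$, and a Lebesgue-point/maximal-function argument to recover a.e.\ pointwise convergence. Everything else is algebra on top of the bridge identity together with Part 1.
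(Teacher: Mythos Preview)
The paper does not prove this proposition at all; it is simply cited from \cite{Samko}. So there is no ``paper's proof'' to compare against, and your sketch is in fact more than what the paper provides.

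Your treatment of Part~1 for $f\in AC$ via integration by parts is clean and correct, and your plan to extend to $L^1$ by density plus the uniform bound $\|{}_aI^\alpha\|_{L^1\to L^1}\le (b-a)^\alpha/\G(\alpha+1)$ is the right route; you are also right that the a.e.\ upgrade is the delicate point and needs a maximal-type inequality, not just $L^1$-convergence. The two $\alpha\nearrow 1$ limits in Part~2 follow exactly as you say, from ${}^C_aD^\alpha f={}_aI^{1-\alpha}(f')$ combined with Part~1, and then the bridge of Proposition~\ref{propo frac}(\ref{relacion RL-C}) together with $1/\G(1-\alpha)\to 0$.

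There are, however, two slips in your handling of the $\alpha\searrow 1$ limits. First, you write ``apply Part~1 with exponent $2-\alpha\nearrow 1$'', but Part~1 is the statement ${}_aI^\beta\to \mathrm{Id}$ as $\beta\searrow 0$; what you actually need here is the (different, and in fact easier) continuity ${}_aI^{\beta}\to{}_aI^{1}$ as $\beta\nearrow 1$, which follows directly by dominated convergence on the kernel since $(x-p)^{\beta-1}/\G(\beta)\to 1$ without any singularity. Second, your Caputo argument via ${}^C_aD^\alpha f={}_aI^{2-\alpha}(f'')$ presupposes $f''\in L^1$, i.e.\ $f'\in AC$, which is strictly stronger than the stated hypothesis $f\in AC$. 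One way to avoid this is to use instead the relation ${}^C_aD^\alpha f={}^{RL}_aD^\alpha\bigl[f(\cdot)-f(a)-f'(a^+)(\cdot-a)\bigr]$ for $\alpha\in(1,2)$, which only needs $f\in AC$ and the existence of $f'(a^+)$; then the RL limit you just established gives $\frac{d}{dx}\bigl[f(x)-f(a)-f'(a^+)(x-a)\bigr]=f'(x)-f'(a^+)$ directly, and the extra boundary term appears for the right structural reason rather than through an integration of $f''$.
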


There are not many functions such that we can make a direct computation of its fractional integral or derivative. From simple calculations we know that the fractional integral and derivative of powers are given by
\begin{equation}\label{Int-de-pot}
_aI^{\al}\left((x-a)^{\be}\right)=\frac{\G(\be + 1)}{\G(\be+\al+1)}(x-a)^{\be+\al }, \qquad \text{ for every } \quad  \be>-1, 
\end{equation}
and 
\begin{equation}\label{DRL-de-pot}
_a^{RL}D^{\al}\left((x-a)^{\be}\right)=\begin{cases}\frac{\G(\be + 1)}{\G(\be-\al+1)}(x-a)^{\be-\al} & \text{ if } \be \neq \al - 1,\\ 
0 & \text{ if } \be=\al-1.
\end{cases} 
\end{equation}

Besides, in  \cite{KiSi:1996}  some computations of  integrals and derivatives  of some special cases related to a three-parametric Mittag--Leffler function were proved. They are in the next proposition (see \cite{KiSi:1996}: Theorem 2 and Theorem 4).

\begin{defi} Let $\al>0, m>0$, and $l$ such that $\al(jm+l)\neq -1,-2,-3,\dots \,\, (j=0,1,2,\dots)$. The three-parametric Mittag-Leffler function $E_{\al,m,l}(z)$ is defined by
\begin{equation}\label{ML-3param}
E_{\al,m,l}(z)=\sum_{n=0}^\infty c_n z^n,\,\,\text{ with }\,\,c_0=1,\,\, c_n=\prod\limits_{j=0}^{n-1}\frac{\Gamma(\al(jm+l)+1)}{\Gamma(\al(jm+l+1)+1)},\,\, (n=1,2,3,\dots).
\end{equation}

\end{defi}

\begin{remark}\label{ML3particulares}
In particular, $E_{1,1,0}(z)=e^z$ and we recover the classical Mittag-Leffler function for $m=1$ and $l=0$ $E_{\al,1,0}(z)=E_{\al}(z)$.   
Also, a two parametric Mittag--Leffler function is recovered for the case $E_{\al,1,l}(z)=\Gamma(\al l+1)E_{\al,\al l+1}(z)$. And the special case of our interest which is 
  $E_{1,2,1}\left(-\frac{z^2}{2}\right)=e^{-\left(\frac{z}{2}\right)^2}.$
\end{remark}

\begin{prop}\label{MLintder} Let $\al>0, m>0$ and $a\neq 0$ be. 
\begin{enumerate}
\item If $l>-\frac{1}{\al}$, then for every $x\in \bbR^+$ it holds that
$$(_{0}I^\al[p^{\al l}E_{\al,m,l}(a p^{\al m})])(x)=\frac{1}{a}x^{\al(l-m+1)}[E_{\al,m,l}(a x^{\al m})-1],$$
\item If $l>m-1-\frac{1}{\al}$, then for every $x\in \bbR^+$ it holds that
$$(_{0}^{RL}D^\al[p^{\al (l-m+1)}E_{\al,m,l}(a p^{\al m})])(x)=\frac{\Gamma(\al(l-m+1)+1)}{\Gamma(\al(l-m)+1)}x^{\al(l-m)}+ax^{\al l}E_{\al,m,l}(a x^{\al m}).$$
If further $\al (l-m)=-j$ for some $j=1,2,\dots,-[-\al]$, then
$$(_{0}^{RL}D^\al[p^{\al (l-m+1)}E_{\al,m,l}(a p^{\al m})])(x)=ax^{\al l}E_{\al,m,l}(a x^{\al m}).$$
\end{enumerate}
\end{prop}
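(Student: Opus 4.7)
The plan is to prove both identities via term-by-term manipulation of the power series defining $E_{\al,m,l}$, combined with the coefficient recursion built into \eqref{ML-3param}. In each case, applying the formulas \eqref{Int-de-pot} and \eqref{DRL-de-pot} termwise produces a sum whose coefficients telescope onto the neighbouring $c_n$, after which a reindexing recovers the three-parametric Mittag--Leffler function.

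For part $1$ I would start from
\[
p^{\al l}E_{\al,m,l}(a p^{\al m})=\sum_{n=0}^\infty c_n a^n p^{\al(l+mn)}.
\]
The hypothesis $l>-1/\al$ together with $m>0$ forces $\al(l+mn)>-1$ for every $n\geq 0$, so \eqref{Int-de-pot} applies term by term and yields
\[
\sum_{n=0}^\infty c_n a^n\,\frac{\Gamma(\al(l+mn)+1)}{\Gamma(\al(l+mn+1)+1)}\,x^{\al(l+mn+1)}.
\]
By the recursion $c_{n+1}=c_n\,\Gamma(\al(mn+l)+1)/\Gamma(\al(mn+l+1)+1)$ the coefficient collapses to $c_{n+1}$. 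Re-indexing $n\mapsto n-1$ and factoring out $x^{\al(l-m+1)}/a$ produces the series for $E_{\al,m,l}(ax^{\al m})$ minus its constant term $c_0=1$, which is exactly the right-hand side.

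For part $2$ the same strategy applies with \eqref{DRL-de-pot} in place of \eqref{Int-de-pot}. Starting from
\[
p^{\al(l-m+1)}E_{\al,m,l}(ap^{\al m})=\sum_{n=0}^\infty c_n a^n p^{\al(l-m+1+mn)},
\]
the hypothesis $l>m-1-1/\al$ ensures $\al(l-m+1+mn)>-1$, so \eqref{DRL-de-pot} applies. Term-by-term differentiation gives
\[
\frac{\Gamma(\al(l-m+1)+1)}{\Gamma(\al(l-m)+1)}\,x^{\al(l-m)}+\sum_{n=1}^\infty c_n a^n\,\frac{\Gamma(\al(l-m+1+mn)+1)}{\Gamma(\al(l-m+mn)+1)}\,x^{\al(l-m+mn)}.
\]
Shifting $n\mapsto n+1$ in the sum and invoking the same recursion turns the coefficient into $c_n$, so the sum equals $a\,x^{\al l}E_{\al,m,l}(ax^{\al m})$. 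In the borderline case $\al(l-m)=-j$ with $j\in\{1,\dots,-[-\al]\}$, the denominator $\Gamma(1-j)$ has a pole and the leading prefactor vanishes, in agreement with the second clause of \eqref{DRL-de-pot}.

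The main obstacle is rigorously justifying the interchange of $\,_0I^\al$ and $\,_0^{RL}D^\al$ with the infinite series. I would fix any $X>0$ and work on $(0,X]$: Stirling applied to $\Gamma(y+1)/\Gamma(y+\al+1)\sim y^{-\al}$ shows $c_n$ decays like $[(n-1)!]^{-\al}$, so $\sum c_n a^n p^{\al(l+mn)}$ converges uniformly on $[0,X]$, and multiplication by the integrable kernel $(x-p)^{-\al}/\Gamma(1-\al)$ preserves a summable majorant. Dominated convergence then licenses the interchange for part $1$; for part $2$ I would exploit the identity $\,_0^{RL}D^\al f=\tfrac{d}{dx}\,_0I^{1-\al}f$ to reduce differentiation under the series to the same integrability argument, after which the algebraic telescoping above yields the claimed formulas.
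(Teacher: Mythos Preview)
Your proof is correct. The term-by-term application of \eqref{Int-de-pot} and \eqref{DRL-de-pot}, followed by the telescoping via the recursion $c_{n+1}=c_n\,\Gamma(\al(mn+l)+1)/\Gamma(\al(mn+l+1)+1)$, is exactly the right mechanism, and your justification of the interchange through the factorial-type decay of $c_n$ is adequate.

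Note, however, that the paper does not supply its own proof of this proposition: it is quoted verbatim from \cite{KiSi:1996} (Theorems~2 and~4 there), so there is no in-paper argument to compare against. Your approach is in fact the same one used in that reference, so you have reconstructed the original proof.
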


We will focus on the function 
$\sigma_\al(z)=z^{\alpha-1}E_{\alpha,1+\frac{1}{\alpha},1}\left(-\frac{z^{1+\alpha}}{1+\alpha}\right)$ defined in $\bbR^+$, which will take part in the explicit solutions that will be presented in the next section.

By applying Proposition \ref{MLintder} to function $\sigma_\al$ for the particular case:
$$l=1,\quad,m=1+\frac{1}{\al},\quad a=-\frac{1}{1+\al},$$
it yields that 
\begin{equation}\label{sigmaint}
\left(_{0}^{RL}D_x^\al\left[p^{\al-1}E_{\al,1+\frac{1}{\alpha},1}\left(-\frac{p^{1+\al}}{1+\al}\right)\right]\right)(x)=-\frac{1}{1+\al}x^{\al}E_{\al,1+\frac{1}{\alpha},1}\left(-\frac{x^{1+\al}}{1+\al}\right)
\end{equation}
for every $x\in \bbR^+$.

Moreover, the next interesting convergence holds.  
\begin{prop}\label{prop5}
Let $\displaystyle f_\al (x):=\int_0^x w^{\alpha-1}E_{\alpha,1+\frac{1}{\alpha},1}\left(-\frac{w^{1+\alpha}}{1+\alpha}\right)\dd w$, for $\al\in(0,1)$ be. Then, we have
$$\lim\limits_{\al\nearrow 1}f_\al (x)=\sqrt{\pi}f\left(\frac{x}{2}\right), \quad \text{for every }\, x\, \in \, \bbR^+_0.$$
 where $f(x):={\rm erf}(x)$  is the error function defined in $\bbR^+_0$ by the expression ${\rm erf}(x):=\frac{2}{\sqrt{\pi}}\displaystyle\int_0^x e^{-s^2}\dd s.$

\end{prop}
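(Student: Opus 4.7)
The plan is to expand $f_\al(x)$ as a power series in $x$, take the coefficient-wise limit as $\al\nearrow 1$, identify the limiting series as the Maclaurin expansion of $\sqrt{\pi}\,{\rm erf}(x/2)$, and justify the interchange of the limit with the infinite summation by a discrete dominated convergence argument.

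First, \textbf{series representation}. Using Definition \eqref{ML-3param} with $l=1$ and $m=1+\frac{1}{\al}$, the function $\sigma_\al$ admits the everywhere-convergent expansion
\begin{equation*}
\sigma_\al(w)=\sum_{n=0}^\infty \frac{(-1)^n c_n(\al)}{(1+\al)^n}\,w^{n(1+\al)+\al-1},\qquad c_n(\al)=\prod_{j=0}^{n-1}\frac{\G((1+\al)j+\al+1)}{\G((1+\al)j+2\al+1)}.
\end{equation*}
Since the exponent $n(1+\al)+\al-1>-1$ and the series converges uniformly on compact subsets of $[0,\infty)$, term-by-term integration on $[0,x]$ is legitimate and gives
\begin{equation*}
f_\al(x)=\sum_{n=0}^\infty \frac{(-1)^n\,c_n(\al)}{(1+\al)^n\,[n(1+\al)+\al]}\,x^{n(1+\al)+\al}.
\end{equation*}

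Second, \textbf{coefficient-wise limit and identification}. By continuity of $\G$ on $(0,\infty)$, one has $c_n(\al)\to c_n(1)=\prod_{j=0}^{n-1}\frac{\G(2j+2)}{\G(2j+3)}=\frac{1}{2^n n!}$, together with $(1+\al)^n\to 2^n$, $n(1+\al)+\al\to 2n+1$ and $x^{n(1+\al)+\al}\to x^{2n+1}$. Hence the $n$-th summand converges to $\frac{(-1)^n x^{2n+1}}{4^n n!(2n+1)}$, which is precisely the $n$-th Maclaurin coefficient of $\sqrt{\pi}\,{\rm erf}(x/2)=2\int_0^{x/2}e^{-s^2}\dd s$. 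This is consistent with Remark \ref{ML3particulares}, where $\sigma_1(w)=e^{-w^2/4}$, so the substitution $u=w/2$ already recovers the target value at $\al=1$.

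The main obstacle is \textbf{exchanging the limit with the infinite sum}. To handle it, fix $x$ in a bounded interval and $\al_0\in(0,1)$, and restrict to $\al\in[\al_0,1]$. A Gautschi--Wendel type estimate $\G(b)/\G(b+\al)\le 2\,b^{-\al}$, valid for $b\ge 1$ and $\al\in(0,1]$, applied to each factor in the definition of $c_n(\al)$ (noting $(1+\al)j+\al+1\ge 1$) yields a bound of the form $c_n(\al)\le C\,2^n\,\bigl((n-1)!\bigr)^{-\al_0}$, uniform in $\al\in[\al_0,1]$. Combined with the factor $(1+\al)^{-n}[n(1+\al)+\al]^{-1}x^{n(1+\al)+\al}$ this produces a summable majorant independent of $\al$, and the discrete dominated convergence theorem then delivers the claimed limit $\lim_{\al\nearrow 1}f_\al(x)=\sqrt{\pi}\,{\rm erf}(x/2)$.
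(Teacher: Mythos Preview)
Your approach is essentially the same as the paper's: expand $\sigma_\al$ as a power series, integrate term by term to obtain a series representation for $f_\al$, compute the limit of each coefficient as $\al\nearrow 1$, and identify the resulting series with the Maclaurin expansion of $\sqrt{\pi}\,{\rm erf}(x/2)$. The paper carries out exactly these steps.

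The only substantive difference is in the justification of the interchange of $\lim_{\al\nearrow 1}$ with the infinite sum. The paper simply asserts that since $f_\al$ is analytic in $x$, ``the limit when $\al\rightarrow 1^-$ can be computed term by term''; this is not, by itself, a complete argument, since analyticity in $x$ for each fixed $\al$ does not automatically license exchanging a limit in the \emph{parameter} $\al$ with the summation. Your dominated convergence argument fills precisely this gap: the Wendel-type bound $\G(b)/\G(b+\al)\le 2\,b^{-\al}$ (valid for $b\ge 1$, $\al\in(0,1]$) applied factor by factor gives $c_n(\al)\le 2^n(n!)^{-\al_0}$ uniformly for $\al\in[\al_0,1]$, and together with the trivial bounds on the remaining factors this yields a summable majorant independent of $\al$. (A minor quibble: the natural bound is $(n!)^{-\al_0}$ rather than $((n-1)!)^{-\al_0}$, but either suffices.) So your proof is the paper's proof, made rigorous at the one point where the paper is informal.
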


\begin{proof}
Note that 
$$ \sigma_\al(x)=x^{\alpha-1}E_{\alpha,1+\frac{1}{\alpha},1}\left(-\frac{x^{1+\alpha}}{1+\alpha}\right)= \sum\limits_{n=0}^\infty c_n (-1)^n \frac{x^{(n+1)(1+\alpha)-2}}{(1+\alpha)^n}, $$
where the series is convergent in $\bbR$, and then it converges uniformly in every compact subset of $\bbR$. Hence, integrating term by term in the series, the following expression to $f_\al$ holds
\begin{equation}\label{f_al-Serie} f_\al (x)=x^\alpha\sum\limits_{n=0}^\infty\frac{ c_n }{[(n+1)(1+\alpha)-1]}\left(-\frac{x^{1+\alpha}}{1+\alpha}\right)^n.\end{equation}
From \eqref{f_al-Serie} we deduce that $f_\al$ is an analytic function for all $x>0$ and the limit when $\al\rightarrow 1^-$ can be computed term by term. Then, taking into account that 
\begin{equation}
\begin{split}
\lim\limits_{\alpha\nearrow 1}c_n&=\lim\limits_{\alpha\nearrow 1}\prod\limits_{j=0}^{n-1}\frac{\Gamma((j+1)(1+\alpha))}{\Gamma((j+1)(1+\alpha)+\alpha)}=\frac{1}{2^n n!}
\end{split}
\end{equation}
we have
$$
\lim\limits_{\alpha\nearrow 1} f_\alpha(x)=x\sum\limits_{n=0}^\infty\frac{1}{2^n n![2(n+1)-1]}\left(-\frac{x^2}{2}\right)^n
=\sum\limits_{n=0}^\infty \frac{(-1)^n }{2^{2n} n!(2n+1)}x^{2n+1}
=\sqrt{\pi}\,{\rm erf}\left(\frac{x}{2}\right).
$$
\end{proof}

\section{The self-similar solution in terms of the Mittag-Leffler function and its properties}

The aim of this section is to obtain an exact solution to problems  \eqref{FSP-D} and \eqref{FSP-N}. For simplicity, all the thermophysical parameters will be considered as constants equals to one. 

First, we will look for a self-similar solution through the method of similarity variables \cite{Cannon, Pincho-Rubi, Tarzia}. Suppose that $u=u(x,t)$ is a solution to the space fractional diffusion equation \eqref{FDE-q-Cap} and let the function $u_\lambda$ be defined by
\begin{equation}\label{cambiovariables}
u_\lambda(x,t)=u\left(\frac{x}{\lambda},\frac{t}{\lambda^b}\right),
\end{equation}
for $b\in \bbR$ and $\lambda>0$.

\begin{prop}
A function $u=u(x,t)$ is a solution to \eqref{FDE-q-Cap} in $\Omega\times(0,T)$  if and only if $u_\lambda= u_\lambda(x,t)$ is a solution to \eqref{FDE-q-Cap} in $\widetilde{\Omega}\times(0,\widetilde{T})$ for $b=1+\al$,  for all $\lambda>0$, where $\widetilde{\Omega}=\frac{1}{\lambda}\Omega$, $(0,\widetilde{T})=\left(0,\frac{T}{\lambda^{1+\al}}\right)$.
\end{prop}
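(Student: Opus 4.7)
The plan is to compute both sides of \eqref{FDE-q-Cap} for $u_\lambda$ in terms of $u$ and read off the unique exponent $b$ that makes them consistent. The proof is essentially a direct calculation based on the chain rule and a linear change of variable in the Caputo integral.

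First I would compute the time derivative, which is trivial: by the chain rule,
$$
(u_\lambda)_t(x,t)=\tfrac{1}{\lambda^b}\,u_t\!\left(\tfrac{x}{\lambda},\tfrac{t}{\lambda^b}\right).
$$
Next, for the spatial side, I would start from the definition \eqref{Caputo} applied to $u_\lambda$, noting that $(u_\lambda)_p(p,t)=\tfrac{1}{\lambda}\,u_y(p/\lambda,t/\lambda^b)$, and perform the substitution $p=\lambda q$ (so $dp=\lambda\,dq$, and the upper limit becomes $x/\lambda$). The factor $(x-\lambda q)^{-\alpha}$ pulls out a $\lambda^{-\alpha}$, and after collecting the scalings one obtains
$$
{}^{C}_{0}D_x^{\alpha}u_\lambda(x,t)=\tfrac{1}{\lambda^{\alpha}}\,\bigl({}^{C}_{0}D_y^{\alpha}u\bigr)\!\left(\tfrac{x}{\lambda},\tfrac{t}{\lambda^b}\right).
$$
Differentiating once more in $x$ brings out an extra $1/\lambda$ by the chain rule, giving
$$
\tfrac{\partial}{\partial x}\,{}^{C}_{0}D_x^{\alpha}u_\lambda(x,t)=\tfrac{1}{\lambda^{1+\alpha}}\,\tfrac{\partial}{\partial y}\bigl({}^{C}_{0}D_y^{\alpha}u\bigr)\!\left(\tfrac{x}{\lambda},\tfrac{t}{\lambda^b}\right).
$$

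Combining the two computations yields the key identity
$$
(u_\lambda)_t(x,t)-\tfrac{\partial}{\partial x}\,{}^{C}_{0}D_x^{\alpha}u_\lambda(x,t)
=\tfrac{1}{\lambda^b}\bigl[u_t-\lambda^{b-(1+\alpha)}\tfrac{\partial}{\partial y}{}^{C}_{0}D_y^{\alpha}u\bigr]\!\left(\tfrac{x}{\lambda},\tfrac{t}{\lambda^b}\right).
$$
Choosing $b=1+\alpha$ makes the bracket equal to the residual of \eqref{FDE-q-Cap} for $u$ itself, rescaled by a strictly positive factor $\lambda^{-(1+\alpha)}$. Hence $(u_\lambda)_t-\partial_x{}^{C}_{0}D_x^{\alpha}u_\lambda$ vanishes at $(x,t)$ if and only if $u_t-\partial_y{}^{C}_{0}D_y^{\alpha}u$ vanishes at $(x/\lambda,t/\lambda^b)$, establishing the equivalence at every point of the corresponding domains. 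The converse direction needs no separate argument since the transformation $\lambda\mapsto 1/\lambda$ inverts the map $u\mapsto u_\lambda$, so the computation is symmetric.

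The only step that requires a little care is the change of variable in the Caputo integral, because both the integrand and the singular weight $(x-p)^{-\alpha}$ must be rescaled coherently; this is where the characteristic exponent $\lambda^{-\alpha}$ appears, and it is what forces $b=1+\alpha$ rather than $b=2$ as in the classical case. The domain bookkeeping is straightforward: as $(x,t)$ ranges over the rescaled cylinder, $(x/\lambda,t/\lambda^b)$ ranges over $\Omega\times(0,T)$, so that the pointwise equivalence established above holds everywhere needed. No special-function machinery from Section 3 is required for this particular proposition.
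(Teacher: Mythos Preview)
Your proof is correct and follows essentially the same route as the paper: compute $(u_\lambda)_t$ and $\partial_x\,{}^C_0D_x^\alpha u_\lambda$ separately via the chain rule and a linear substitution in the Caputo integral, then combine to obtain the identity \eqref{met-sem-5} and read off $b=1+\alpha$. Your remark on the $\lambda\mapsto 1/\lambda$ symmetry for the converse and on the domain bookkeeping is a nice addition, but otherwise the argument matches the paper's.
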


\begin{proof}
Let us define  the function  $ u_\lambda(x,t)\colon= u(\bar{x},\bar{t}) $ where $x=\lambda \bar{x}$ and $t=\lambda^b \bar{t}$. It is straightforward to see that 
\begin{equation}\label{met-sem-1}
\frac{\p}{\p t}u_\lambda(x,t)=u_{\bar{t}}(\bar{x},\bar{t})\frac{1}{\lambda^b}.
\end{equation} 
\begin{equation}\label{met-sem-2}
\frac{\p}{\p x}u_\lambda(x,t)=u_{\bar{x}}(\bar{x},\bar{t})\frac{1}{\lambda}.
\end{equation} 
\begin{equation}\label{met-sem-3}
^C_0D_x^{\al} u_\lambda (x,t)=\lambda^{-\al}\, ^C_0D_{\bar{x}}^{\al} u(\bar{x},\bar{t}).
\end{equation} 
and 
\begin{equation}\label{met-sem-4}
\frac{\p}{\p x}\,^C_0D_x^{\al} u_\lambda (x,t)=\lambda^{-\al-1}\frac{\p}{\p \bar{x}}\, ^C_0D_{\bar{x}}^{\al} u(\bar{x},\bar{t}).
\end{equation} 
Then, from \eqref{met-sem-1}, \eqref{met-sem-2} and \eqref{met-sem-4} it follows that 
 \begin{equation}\label{met-sem-5}
\frac{\p}{\p t}u_\lambda(x,t)-\frac{\p}{\p x}\,^C_0D_x^{\al} u_\lambda (x,t)= \lambda^{-b}u_{\bar{t}}(\bar{x},\bar{t})-  \lambda^{-\al-1}\frac{\p}{\p \bar{x}}\,^C_0D_{\bar{x}}^{\al} u(\bar{x},\bar{t}).
\end{equation} 
From equality \eqref{met-sem-5} the thesis holds.
\end{proof}

The scaling in the previous result indicates that the ratio $\frac{x}{t^{\frac{1}{1+\alpha}}}$ plays an important role in equation \eqref{FDE-q-Cap}. This fact  suggests us to search for a solution
$u(x,t)=\theta \left(\frac{x}{t^{\frac{1}{1+\alpha}}}\right)$. Thus we define the one variable function  
\begin{equation}\label{Theta}
\theta(z):= u(x,t),
\end{equation}
where $z$ is the similarity variable defined as
\begin{equation}\label{SimVar}
z:= \frac{x}{t^{\frac{1}{1+\alpha}}}.
\end{equation}
Now, we  apply the chain rule in order to obtain an ordinary fractional differential equation for the function $\theta=\theta(z)$.
The next calculation follows from the chain rule.
\begin{equation}\label{eq6}
\begin{split}
\frac{\partial}{\partial t}u (x,t)=-\frac{z}{(1+\alpha)t}\theta'(z).
\end{split}
\end{equation}
Also, by making the substitution $w=\frac{p}{t^{\frac{1}{1+\alpha}}}$, it follows that
\begin{equation}\label{eq7}
\begin{split}
\frac{\partial}{\partial x}\left(^C_0D_{x}^{\al} u (x,t)\right)&=\frac{\partial}{\partial x}\left(\frac{1}{\Gamma(1-\alpha)}\int_0^z \frac{\theta'(w)}{t^{\frac{\alpha}{1+\alpha}}(z-w)^\alpha} \dd w\right)\\
&=\frac{1}{t}\frac{\partial}{\partial z}\,^C_0D_{z}^{\al} \theta(z).
\end{split}
\end{equation}

From \eqref{eq6} and \eqref{eq7}, we deduce

\begin{equation}\label{eq9}
\begin{split}
0=\frac{\partial}{\partial t}u (x,t) -\frac{\partial}{\partial x}\,^C_0D_{x}^{\al}  u (x,t)=-\frac{1}{t}\left[\frac{z}{1+\alpha}\theta'(z)+\frac{\partial}{\partial z}\,^C_0D_{z}^{\al} \theta(z)\right],
\end{split}
\end{equation}
and then,
\begin{equation}\label{eqdiftheta}
\frac{z}{1+\alpha}\theta'(z)+\frac{\partial}{\partial z}\,^C_0D_{z}^{\al} \theta(z)=0.
\end{equation}

Reciprocally, if $\theta$ is a solution to \eqref{eqdiftheta}, we can go back over previous calculations and obtain that  $u$ is  a solution of  \eqref{FDE-q-Cap}. More precisely:

\begin{prop}The function $u$ is a solution to equation \eqref{FDE-q-Cap} if and only if  function $\theta$ defined by \eqref{Theta} with the similarity variable \eqref{SimVar} is a solution to equation \eqref{eqdiftheta}.
\end{prop}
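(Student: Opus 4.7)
The proof is essentially already written in the discussion leading up to the proposition; my task is to verify that the chain of implications is bidirectional. The plan is to fix $x>0$ and $t\in(0,T)$, set $z=x/t^{1/(1+\al)}$, and establish two pointwise identities linking the $x,t$-derivatives of $u$ to the $z$-derivatives of $\theta$. Each of these identities is an algebraic equality that can be read in either direction, so combining them makes the PDE \eqref{FDE-q-Cap} and the ODE \eqref{eqdiftheta} equivalent point by point.

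First I would re-derive $u_t(x,t)=-\frac{z}{(1+\al)t}\theta'(z)$ from the chain rule; this is immediate. For the fractional term, I would perform the substitution $w=p/t^{1/(1+\al)}$ inside the Caputo integral
$$\,^C_0D_x^{\al} u(x,t)=\frac{1}{\G(1-\al)}\int_0^x \frac{u_x(p,t)}{(x-p)^\al}\,\dd p,$$
using $u_x(p,t)=t^{-1/(1+\al)}\theta'(w)$, $\dd p=t^{1/(1+\al)}\dd w$, and $(x-p)^{-\al}=t^{-\al/(1+\al)}(z-w)^{-\al}$. This yields the clean identity $\,^C_0D_x^{\al} u(x,t)=t^{-\al/(1+\al)}\,^C_0D_z^{\al}\theta(z)$. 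Differentiating in $x$ with $\p z/\p x=t^{-1/(1+\al)}$ then gives $\frac{\p}{\p x}\,^C_0D_x^{\al} u(x,t)=\frac{1}{t}\frac{\p}{\p z}\,^C_0D_z^{\al}\theta(z)$. All of this coincides with equations \eqref{eq6}--\eqref{eq7}, but I want to stress that each step is an equality of continuous functions and therefore fully reversible.

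Inserting both identities into \eqref{FDE-q-Cap} at the point $(x,t)$ produces
$$-\frac{1}{t}\left[\frac{z}{1+\al}\theta'(z)+\frac{\p}{\p z}\,^C_0D_z^{\al}\theta(z)\right]=0,$$
and since $t>0$ this is equivalent to \eqref{eqdiftheta} evaluated at the corresponding $z$. As $(x,t)$ ranges over $\Omega\times(0,T)$ the similarity variable $z=x/t^{1/(1+\al)}$ sweeps out the relevant subset of $\bbR^+$, so the equivalence is transferred in full. The only delicate point is justifying the change of variables inside the fractional integral: this is the step I expect to be the main technical obstacle, but it is routine once one assumes $\theta$ is absolutely continuous with $\theta'$ locally integrable, which is the natural regularity for $u_x(\cdot,t)$ implied by \eqref{Caputo}.
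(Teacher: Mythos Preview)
Your proposal is correct and follows essentially the same approach as the paper: the chain-rule identity \eqref{eq6}, the substitution $w=p/t^{1/(1+\al)}$ inside the Caputo integral to obtain \eqref{eq7}, and the combination \eqref{eq9} are exactly the paper's argument, and the paper likewise notes that these steps can be reversed. Your explicit remark that $z$ sweeps out the relevant range as $(x,t)$ varies over $\Omega\times(0,T)$ is a nice clarification that the paper leaves implicit.
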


Now, we seek for a solution to \eqref{eqdiftheta}. Making the substitution $\sigma(z)=\theta'(z)$, and using Proposition \ref{propo frac} (part \ref{propClave}), we convert \eqref{eqdiftheta} into the next equation
\begin{equation}\label{eq9.5}
\begin{split}
\frac{z}{1+\alpha}\sigma(z)+{^R}{^L}D_z^\alpha(\sigma)(z)=0.
\end{split}
\end{equation}
From \cite{KiSi:1996} we know that a solution to \eqref{eq9.5} is given by
\begin{equation}\label{eq10}
\begin{split}
\sigma(z)&=z^{\alpha-1}E_{\alpha,1+\frac{1}{\alpha},1}\left(-\frac{z^{1+\alpha}}{1+\alpha}\right)=\sum\limits_{n=0}^\infty c_n (-1)^n \frac{z^{(n+1)(1+\alpha)-2}}{(1+\alpha)^n},
\end{split}
\end{equation}
where $c_n$ is given in \eqref{ML-3param}.

Hence,
\begin{equation}
\begin{split}
\theta(z)&=A+B\int_0^z w^{\alpha-1}E_{\alpha,1+\frac{1}{\alpha},1}\left(-\frac{w^{1+\alpha}}{1+\alpha}\right)\dd w=A+B\sum\limits_{n=0}^\infty\frac{ c_n (-1)^n }{(1+\alpha)^n}\frac{z^{(n+1)(1+\alpha)-1}}{(n+1)(1+\alpha)-1}\\
\end{split}
\end{equation}
is a solution to equation \eqref{eqdiftheta} for arbitrary real constants $A$ and $B$.

\begin{remark} Note that the unique continuous solution to \eqref{eq9.5} at $z=0$ is the null function, that is, the solution such that $\theta'(0^+)=0$. But, adressing the problem with initial conditions in terms of fractional integrals, we obtain solutions with a singularity at $z=0$ that verify the requirest initial condition.
\end{remark}

\begin{remark}
We can say that $\theta$ is an absolutely continuous function, since $\displaystyle\theta(z)=\theta(0)+\int_0^z \theta'(w)\dd w$.  Therefore, by Proposition \ref{propo frac}, $\frac{\partial}{\partial z}\,^C_0D_z^\alpha\theta(z)={^R}{^L}D_z^\alpha(\theta')(z)$.\end{remark}

Hereinafter we denote by
\begin{equation}\label{defsigma}
\sigma_\al(w):= w^{\alpha-1}E_{\alpha,1+\frac{1}{\alpha},1}\left(-\frac{w^{1+\alpha}}{1+\alpha}\right).
\end{equation}

\begin{prop}\label{solgeneral}
For every $A, B \in \bbR$, the function $u\colon \bbR^+_0\times(0,T)\rightarrow \bbR$ such that 
\begin{equation}\label{solu}
u(x,t)= A+B \int_0^{x/t^{\frac{1}{1+\al}}}\sigma_\al(w)\dd w.
\end{equation}
is a solution to the space-fractional diffusion equation \eqref{FDE-q-Cap}.
\end{prop}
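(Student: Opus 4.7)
The plan is to chain together the structural results already established in Section~4 and reduce everything to the identity \eqref{sigmaint} supplied by Proposition \ref{MLintder}. Concretely, the immediately preceding proposition asserts that $u(x,t)$ solves \eqref{FDE-q-Cap} if and only if $\theta(z):=u(x,t)$, with $z=x/t^{1/(1+\al)}$, satisfies \eqref{eqdiftheta}. So the whole task collapses to verifying that the one--variable function
$$\theta(z):=A+B\int_0^z \sigma_\al(w)\,\dd w$$
solves \eqref{eqdiftheta} for every choice of $A,B\in\bbR$.

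The first key step is to observe that $\sigma_\al$ solves the fractional ODE \eqref{eq9.5}. This follows directly from \eqref{sigmaint}: applying Proposition \ref{MLintder} (part 2) with $l=1$, $m=1+1/\al$, $a=-1/(1+\al)$ yields
$$\,^{RL}_{0}D_z^\al \sigma_\al(z)=-\frac{z^\al}{1+\al}E_{\al,1+\frac{1}{\al},1}\!\left(-\frac{z^{1+\al}}{1+\al}\right)=-\frac{z}{1+\al}\,\sigma_\al(z),$$
which is exactly \eqref{eq9.5}. Since $\theta'(z)=B\sigma_\al(z)$ and $\theta$ is absolutely continuous on every interval away from the origin (as already remarked in the paper by writing $\theta(z)=\theta(0)+\int_0^z\theta'(w)\,\dd w$), I would then apply Proposition \ref{propo frac} (part \ref{propClave}) to commute the derivative with the Caputo operator:
$$\frac{\p}{\p z}\,^C_0D_z^\al\theta(z)=\,^{RL}_0D_z^\al(\theta')(z)=B\,^{RL}_0D_z^\al\sigma_\al(z)=-\frac{z}{1+\al}\,B\sigma_\al(z)=-\frac{z}{1+\al}\theta'(z),$$
which is precisely \eqref{eqdiftheta}.

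Finally, I would invoke the equivalence proposition in the reverse direction to transfer the result from $\theta$ back to $u(x,t)=\theta(x/t^{1/(1+\al)})$, obtaining that \eqref{solu} solves \eqref{FDE-q-Cap} on $\bbR^+_0\times(0,T)$.

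The only delicate point I anticipate is the regularity hypothesis needed to invoke Proposition \ref{propo frac} part \ref{propClave}, namely that $_0I^{1-\al}\theta'\in AC$. Because $\sigma_\al(w)$ carries a $w^{\al-1}$ factor at the origin, one must confirm that $_0I^{1-\al}\sigma_\al$ is absolutely continuous; this is where I would fall back on the uniformly convergent series representation \eqref{eq10}, integrate the fractional integral term by term (using \eqref{Int-de-pot}), and check that the resulting series yields an absolutely continuous function on every compact subinterval of $\bbR^+$. Once this routine regularity check is in place, the proof is essentially a two--line concatenation of the three propositions cited above.
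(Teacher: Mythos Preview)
Your proposal is correct and follows essentially the same route as the paper: the paper's proof is a one-line appeal to ``the chain rule, property \eqref{propClave} of Proposition \ref{propo frac} and expression \eqref{sigmaint},'' which is exactly the three-ingredient concatenation you spell out (with the chain-rule step packaged as the equivalence proposition). Your added remark on the $AC$ regularity of $_0I^{1-\al}\theta'$ is a welcome elaboration but does not change the strategy.
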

\proof The  proof is a direct consequence from the chain rule, property \eqref{propClave} of Proposition \ref{propo frac} and expression \eqref{sigmaint}.
\endproof

\begin{remark} It is also interesting the series approach in the aim to prove that  \eqref{solu} is a solution of \eqref{FDE-q-Cap}. At first, note that
$$u(x,t)=A+B\sum\limits_{n=0}^\infty\frac{ c_n (-1)^n }{(1+\alpha)^n}\frac{x^{(n+1)(1+\alpha)-1}}{[(n+1)(1+\alpha)-1]t^{\frac{(n+1)(1+\alpha)-1}{1+\alpha}}},$$
where the series in right side is absolutely convergent over compact sets in $\bbR^+_0 \times (0,T)$. Then, we can interchange $\,^C_0D_{x}^{\al}$ and partial derivatives with the series, obtaining that 

\begin{equation}\label{DCu}
\begin{split}
^C_0D_{x}^{\al} u(x,t)&=B\sum\limits_{n=0}^\infty\frac{ c_n (-1)^n }{(1+\alpha)^{n}}\frac{\Gamma((n+1)(1+\alpha)-1)}{\Gamma((n+1)(1+\alpha)-\alpha)}\frac{x^{n(1+\alpha)}}{t^{\frac{(n+1)(1+\alpha)-1}{1+\alpha}}},
\end{split}
\end{equation}

\begin{equation}
\begin{split}
\frac{\partial}{\partial x}\,^C_0D_{x}^{\al} u(x,t)&=B\sum\limits_{n=1}^\infty\frac{ c_n (-1)^n n}{(1+\alpha)^{n-1}}\frac{\Gamma((n+1)(1+\alpha)-1)}{\Gamma((n+1)(1+\alpha)-\alpha)}\frac{x^{n(1+\alpha)-1}}{t^{\frac{(n+1)(1+\alpha)-1}{1+\alpha}}},
\end{split}
\end{equation}
and
\begin{equation}
\begin{split}
u_t(x,t)&=B\sum\limits_{n=1}^\infty\frac{ c_{n-1} (-1)^{n} }{(1+\alpha)^{n}}\frac{x^{n(1+\alpha)-1}}{t^{\frac{(n+1)(1+\alpha)-1}{1+\alpha}}}.
\end{split}
\end{equation}

Then, if we denote $C_{\alpha,n}:=\frac{c_{n-1}}{(1+\alpha)}-\frac{c_n n\Gamma((n+1)(1+\alpha)-1)}{\Gamma((n+1)(1+\alpha)-\alpha)}$, we have 
\begin{equation}\label{Cn=0}
\begin{split}
C_{\alpha,n}&=\frac{1}{(1+\alpha)}\prod\limits_{j=1}^{n-2}\frac{\Gamma((j+1)(1+\alpha))}{\Gamma((j+1)(1+\alpha)+\alpha)}-\prod\limits_{j=1}^{n-1}\frac{\Gamma((j+1)(1+\alpha))}{\Gamma((j+1)(1+\alpha)+\alpha)}\frac{  n\Gamma((n+1)(1+\alpha)-1)}{\Gamma((n+1)(1+\alpha)-\alpha)}\\
&=\frac{1}{(1+\alpha)}\prod\limits_{j=1}^{n-2}\frac{\Gamma((j+1)(1+\alpha))}{\Gamma((j+1)(1+\alpha)+\alpha)}\left[1-\frac{n(1+\alpha)\Gamma(n(1+\alpha))}{\Gamma(n(1+\alpha)+\alpha)}\frac{\Gamma((n+1)(1+\alpha)-1)}{\Gamma((n+1)(1+\alpha)-\alpha)}\right]\\
&=\frac{1}{(1+\alpha)}\prod\limits_{j=1}^{n-2}\frac{\Gamma((j+1)(1+\alpha))}{\Gamma((j+1)(1+\alpha)+\alpha)}\left[1-\frac{\Gamma(n(1+\alpha)+1)}{\Gamma(n(1+\alpha)+\alpha)}\frac{\Gamma(n(1+\alpha)+\alpha)}{\Gamma(n(1+\alpha)+1)}\right]\\
&=\frac{1}{(1+\alpha)}\prod\limits_{j=1}^{n-2}\frac{\Gamma((j+1)(1+\alpha))}{\Gamma((j+1)(1+\alpha)+\alpha)}\left[1-1\right]=0,\quad \forall n\in\bbN.
\end{split}
\end{equation}

The result \eqref{Cn=0} holds for every  $n\in \bbN$, hence function $u$ is a solution to \eqref{FDE-q-Cap}.

\end{remark}

\begin{prop} If $u$ is the selfsimilar solution given in \eqref{solu}, then we have
\begin{equation}\label{DCu2}
^C_0D_{x}^{\al} u(x,t)=B\left(\frac{\Gamma(\alpha)}{t^{\frac{\alpha}{1+\alpha}}}-\sum\limits_{n=1}^\infty\frac{ c_{n-1} (-1)^{n-1} }{n(1+\alpha)^{n+1}}\frac{x^{n(1+\alpha)}}{t^{n+1-\frac{1}{1+\alpha}}}\right)
\end{equation}
or equivalently,
\begin{equation}\label{eqconddir}
-\,^C_0D_{x}^{\al} u(x,t)=-B\Gamma(\alpha) t^{-\frac{\alpha}{1+\alpha}}+\frac{B t^{-\frac{\alpha}{1+\alpha}}}{1+\alpha}\int_0^{x/t^{\frac{1}{1+\alpha}}} w^\alpha E_{\alpha,1+\frac{1}{\alpha},1}\left(-\frac{w^{1+\alpha}}{1+\alpha}\right)\dd w.
\end{equation}
\end{prop}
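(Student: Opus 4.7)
The plan is to reduce this to algebraic bookkeeping on the series representation that was already established in the preceding remark. First I would invoke expression \eqref{DCu} for $^C_0D_{x}^{\al} u(x,t)$, whose derivation (term-by-term application of the Caputo derivative to the explicit power series for $u$) has already been justified by absolute convergence on compact subsets of $\bbR^+_0\times(0,T)$. The $n=0$ summand of \eqref{DCu} contributes $B\cdot c_0\cdot\Gamma(\al)/\Gamma(1)\cdot t^{-\al/(1+\al)}=B\Gamma(\al)t^{-\al/(1+\al)}$, which is exactly the leading term of \eqref{DCu2}; so the content of the first identity is that the remaining tail $n\ge 1$ of \eqref{DCu} coincides with the announced sum.

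To establish this I would use the explicit product form of $c_n$. Substituting $m=1+1/\al$ and $l=1$ into \eqref{ML-3param} gives
$$c_n=\prod_{j=0}^{n-1}\frac{\Gamma((j+1)(1+\al))}{\Gamma((j+1)(1+\al)+\al)}, \qquad \frac{c_n}{c_{n-1}}=\frac{\Gamma(n(1+\al))}{\Gamma(n(1+\al)+\al)}.$$
Using the elementary relations $(n+1)(1+\al)-1=n(1+\al)+\al$ and $(n+1)(1+\al)-\al=n(1+\al)+1$, together with $\Gamma(n(1+\al)+1)=n(1+\al)\,\Gamma(n(1+\al))$, the Gamma-ratio appearing in the coefficient of \eqref{DCu} telescopes against $c_n$ and yields $c_{n-1}/[n(1+\al)^{n+1}]$ times the appropriate sign. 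This is exactly the same mechanism that was used in the preceding remark to prove $C_{\al,n}\equiv 0$, so I would quote that calculation rather than redo it.

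For the equivalent integral form \eqref{eqconddir}, I would expand the Mittag-Leffler function inside the right-hand integral as a power series, which converges uniformly on compact subsets of $\bbR^+_0$ (the entire-function property used in Proposition \ref{prop5}), and integrate term by term. With the reindexing $n\mapsto n-1$ and the substitution $z=x/t^{1/(1+\al)}$, the resulting series matches the tail $\sum_{n\ge 1}$ of \eqref{DCu2} up to the prefactor $t^{-\al/(1+\al)}/(1+\al)$. Multiplying by $-1$ and incorporating the leading $\Gamma(\al)$ term then produces \eqref{eqconddir} exactly.

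The only real obstacle is the Gamma-ratio simplification in the middle paragraph; the rest is bookkeeping and a routine interchange of sum and integral justified by uniform convergence on compacta.
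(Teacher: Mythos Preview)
Your proposal is correct and follows essentially the same route as the paper: the paper also starts from \eqref{DCu}, uses the recursion $c_n=c_{n-1}\Gamma(n(1+\al))/\Gamma(n(1+\al)+\al)$ together with the identities $(n+1)(1+\al)-1=n(1+\al)+\al$, $(n+1)(1+\al)-\al=n(1+\al)+1$ and $\Gamma(n(1+\al)+1)=n(1+\al)\Gamma(n(1+\al))$ to collapse the Gamma ratio into $c_{n-1}/[n(1+\al)]$, and then passes between the series and the integral form by recognizing the integrand as $w^\al E_{\al,1+1/\al,1}(-w^{1+\al}/(1+\al))$ via term-by-term identification. The only cosmetic difference is that the paper derives \eqref{eqconddir} by writing the series as the integral of a differentiated power series, whereas you propose to expand the integral and match coefficients---the same identity checked in the opposite direction.
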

\begin{proof}
Using the fact that
$$c_n=c_{n-1}\frac{\Gamma(n(1+\alpha))}{\Gamma(n(1+\alpha)+\alpha)}=c_{n-1}\frac{\Gamma(n(1+\alpha)+1)}{\Gamma(n(1+\alpha)+\alpha)n(1+\alpha)}=\frac{c_{n-1}}{n(1+\alpha)}\frac{\Gamma((n+1)(1+\alpha)-\alpha)}{\Gamma((n+1)(1+\alpha)-1)},$$
and replacing the above expression in \eqref{DCu} we get
\begin{equation}
\begin{split}
-\,^C_0D_{x}^{\al} u(x,t)&=-\frac{B\Gamma(\alpha)}{t^{\frac{\alpha}{1+\alpha}}}-B\sum\limits_{n=1}^\infty\frac{ c_n (-1)^n }{(1+\alpha)^{n}}\frac{\Gamma((n+1)(1+\alpha)-1)}{\Gamma((n+1)(1+\alpha)-\alpha)}\frac{x^{n(1+\alpha)}}{t^{\frac{(n+1)(1+\alpha)-1}{1+\alpha}}}\\
&=-B\Gamma(\alpha)t^{-\frac{\alpha}{1+\alpha}}+B\sum\limits_{n=1}^\infty\frac{ c_{n-1} (-1)^{n-1} }{n(1+\alpha)^{n+1}}\frac{x^{n(1+\alpha)}}{t^{n+1-\frac{1}{1+\alpha}}}.
\end{split}
\end{equation}
Then
\begin{equation}
\begin{split}
-\,^C_0D_{x}^{\al} u(x,t)&=-B\Gamma(\alpha)t^{-\frac{\alpha}{1+\alpha}}+B\sum\limits_{n=1}^\infty\frac{ c_{n-1} (-1)^{n-1} }{n(1+\alpha)^{n+1}}\frac{x^{n(1+\alpha)}}{t^{n+1-\frac{1}{1+\alpha}}}\\
%&=-B\Gamma(\alpha)t^{-\frac{\alpha}{1+\alpha}}+\frac{B t^{-\frac{\alpha}{1+\alpha}}}{1+\alpha}\sum\limits_{n=1}^\infty\frac{ c_{n-1} (-1)^{n-1} }{n(1+\alpha)^{n}}\xi^{n(1+\alpha)}\\
&=-B\Gamma(\alpha)t^{-\frac{\alpha}{1+\alpha}}+\frac{B t^{-\frac{\alpha}{1+\alpha}}}{1+\alpha}\int_0^{x/t^{\frac{1}{1+\alpha}}} \frac{\dd}{\dd w}\left(\sum\limits_{n=1}^\infty\frac{ c_{n-1} (-1)^{n-1} }{n(1+\alpha)^{n}}w^{n(1+\alpha)}\right)\dd w\\
%&=-B\Gamma(\alpha)t^{-\frac{\alpha}{1+\alpha}}+\frac{B t^{-\frac{\alpha}{1+\alpha}}}{1+\alpha}\int_0^\xi \sum\limits_{n=1}^\infty\frac{ c_{n-1} (-1)^{n-1} }{(1+\alpha)^{n-1}}w^{n(1+\alpha)-1}\dd w\\
&=-B\Gamma(\alpha)t^{-\frac{\alpha}{1+\alpha}}+\frac{B t^{-\frac{\alpha}{1+\alpha}}}{1+\alpha}\int_0^{x/t^{\frac{1}{1+\alpha}}}  \sum\limits_{n=0}^\infty\frac{ c_{n} (-1)^{n} }{(1+\alpha)^{n}}w^{(n+1)(1+\alpha)-1}\dd w\\
%&=-B\Gamma(\alpha)t^{-\frac{\alpha}{1+\alpha}}+\frac{B t^{-\frac{\alpha}{1+\alpha}}}{1+\alpha}\int_0^\xi w^\alpha\sum\limits_{n=0}^\infty\frac{ c_{n} (-1)^{n} }{(1+\alpha)^{n}}\left(w^{1+\alpha}\right)^{n}\dd w\\
&=-B\Gamma(\alpha)t^{-\frac{\alpha}{1+\alpha}}+\frac{B t^{-\frac{\alpha}{1+\alpha}}}{1+\alpha}\int_0^{x/t^{\frac{1}{1+\alpha}}}  w^\alpha E_{\alpha,1+\frac{1}{\alpha},1}\left(-\frac{w^{1+\alpha}}{1+\alpha}\right)\dd w\\
\end{split}
\end{equation}

\end{proof}

The last aim of this subsection is to prove that the kernel of the selfsimilar solution given in \eqref{defsigma} is non-negative in $\bbR^+$ and the proof will be supported in a weak extremum principle for the space fractional diffusion equation
    
\begin{equation}\label{ecdemo0}
u_t -\frac{\partial}{\partial x}\,^C_0 D_x^\alpha u =f
\end{equation}
in the region
\begin{equation}\label{regionQ}Q_{s,T}=\{(x,t): 0<x<s(t), 0<t<T\},
\end{equation}
for a given function $s:[0,T]\rightarrow \bbR$ such that $s\in C[0,T]$, $s(0)=b>0$ and there exists $ M>0\,  / \, 0<\dot{s}(t)\leq M$ for every $t\in [0,T]$. We define the  parabolic boundary of $Q_{s,T}$  by
$$\partial\gamma_{s,T}=\gamma_1 \cup \gamma_2 \cup \gamma_3,$$
where $\gamma_1=\{(0,t): 0\leq t \leq T\},$ $\gamma_2=\{(x,0): 0\leq x \leq s(0)=b\} $ and  $\gamma_3=\{(s(t),t): 0\leq t \leq T\}.$
%
%\begin{figure}[h]
%\centering
%\includegraphics[scale=0.25]{Qst.png}
%\caption{Region $Q_{s,T}$ and its parabolic boundary.}
%\end{figure}

The next weak extremum principle was stated in \cite{Rys:2020} and we recall it below for the benefit of the reader.
\begin{theo}\label{lemaKaRy}%\cite{Rys:2020}
Let $u$ be a solution to \eqref{ecdemo0} in the region $Q_{s,T}$ defined in \eqref{regionQ}, such that $u$ has the following regularity: $u \in C(\overline{Q_{s,T}}), u_t \in C(Q_{s,T})$ and for every $t\in (0, T)$, for every $0 < \eta < \omega < s(t)$ we have $u(\cdot, t) \in W^{2,\frac{1}{1-\beta}} ( \eta, \omega)$ for some $\beta \in  (\al, 1]$. Let $\partial \gamma_{s,T}$ be its parabolic boundary. Then,
\begin{enumerate}
\item If $f \leq 0$, then $u$ attains its maximum on $\partial \gamma_{s,T}$.
\item If $f \geq 0$, then $u$ attains its minimum on $\partial \gamma_{s,T}$.
\end{enumerate}
\end{theo}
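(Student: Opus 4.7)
My strategy is a perturbation--contradiction argument that imports the classical weak maximum principle of parabolic theory into the nonlocal setting. It suffices to prove item (1), since (2) follows by applying (1) to $-u$, which solves \eqref{ecdemo0} with source $-f \leq 0$. Assume for contradiction that $M := \sup_{\overline{Q_{s,T}}} u > m := \max_{\partial \gamma_{s,T}} u$. Fix $\varepsilon > 0$ with $\varepsilon T < M - m$ and set $v := u - \varepsilon t$. Since $v \leq u$ and $v \geq u - \varepsilon T$, we still have $\sup_{\overline{Q_{s,T}}} v > \max_{\partial \gamma_{s,T}} v$, so $v$ must attain its maximum at some interior point $(x_0, t_0)$ with $x_0 \in (0, s(t_0))$ and $t_0 \in (0, T]$. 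The perturbed equation reads $v_t - \partial_x\,^C_0 D_x^\alpha v = f - \varepsilon \leq -\varepsilon < 0$, so a contradiction will follow if I can show $v_t(x_0, t_0) - \partial_x\,^C_0 D_x^\alpha v(x_0, t_0) \geq 0$.

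At the interior maximum the classical necessary conditions give $v_t(x_0,t_0) \geq 0$ and $v_x(x_0,t_0) = 0$, and moreover $v(p,t_0) \leq v(x_0,t_0)$ for every $p \in [0, s(t_0)]$. The heart of the proof is the fractional counterpart of $v_{xx}(x_0,t_0) \leq 0$. By Proposition~\ref{propo frac}(5), $\partial_x\,^C_0 D_x^\alpha v(x_0,t_0) = {}^{RL}_0 D_x^\alpha v_x(x_0,t_0)$, and combining Proposition~\ref{propo frac}(4) with a regularized integration by parts in the singular integral (cut off at $x_0 - \delta$, then let $\delta \to 0^+$), using $v_x(x_0, t_0) = 0$ to annihilate the $x_0^{-\alpha}$ boundary contribution, yields
\[
\partial_x\,^C_0 D_x^\alpha v(x_0,t_0) \;=\; -\frac{\alpha}{\Gamma(1-\alpha)} \int_0^{x_0} v_x(p,t_0)\,(x_0-p)^{-\alpha-1}\, dp.
\]
A second integration by parts, where the boundary term at $p = x_0$ vanishes because $v(x_0,t_0) - v(p,t_0) = O((x_0-p)^{1+\beta})$ near $p = x_0$, rewrites this as $-\tfrac{1}{\Gamma(1-\alpha)}\bigl([v(x_0,t_0) - v(0,t_0)] x_0^{-\alpha-1} + (\alpha+1)\int_0^{x_0}[v(x_0,t_0) - v(p,t_0)](x_0-p)^{-\alpha-2}\, dp\bigr)$. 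Both ingredients are nonnegative by the maximum property, so $\partial_x\,^C_0 D_x^\alpha v(x_0,t_0) \leq 0$, which together with $v_t(x_0,t_0) \geq 0$ contradicts the strict inequality $v_t - \partial_x\,^C_0 D_x^\alpha v < 0$ at $(x_0, t_0)$.

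The main obstacle is the rigorous justification of the two integrations by parts, since the kernels $(x_0-p)^{-\alpha}$ and $(x_0-p)^{-\alpha-1}$ are singular at $p = x_0$. The hypothesis $v(\cdot, t_0) \in W^{2, 1/(1-\beta)}(\eta, \omega)$ with $\beta \in (\alpha, 1]$ is exactly what is needed: H\"older's inequality pairs $v_{xx}(\cdot,t_0) \in L^{1/(1-\beta)}$ with the kernel $(x_0 - \cdot)^{-\alpha} \in L^{1/\beta}_{\mathrm{loc}}$ (finite since $\alpha/\beta < 1$), and the identity $v_x(p,t_0) = -\int_p^{x_0} v_{xx}(q, t_0)\,dq$ gives $|v_x(p,t_0)| \lesssim (x_0 - p)^\beta$, whence $|v(x_0, t_0) - v(p, t_0)| \lesssim (x_0 - p)^{1+\beta}$. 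These bounds control the singular boundary contributions in the cut-off procedure and ensure that all the integrals appearing after integration by parts are convergent, closing the argument.
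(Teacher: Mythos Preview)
The paper does not supply its own proof of this theorem: it is quoted verbatim from Ryszewska \cite{Rys:2020} (referred to later as \cite[Lemma~6]{Rys:2020}) and used as a black box in the proof of Proposition~\ref{PropnonegML3}. There is therefore nothing in the paper to compare against.

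Your argument is the standard one for weak extremum principles of this type and is essentially correct. The perturbation $v=u-\varepsilon t$ to force a strict subsolution, the localization of the maximum off the parabolic boundary, and the nonlocal ``second--derivative test'' showing $\partial_x\,^C_0D_x^\alpha v(x_0,t_0)\le 0$ at an interior spatial maximum are exactly the ingredients one expects (and, in fact, this is the route taken in \cite{Rys:2020}). Two small remarks. First, in your final displayed representation you have dropped the factor $\alpha$ that was present after the first integration by parts; the correct constant in front of the bracket is $-\alpha/\Gamma(1-\alpha)$, not $-1/\Gamma(1-\alpha)$. Since $\alpha>0$ this does not affect the sign conclusion. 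Second, the hypothesis $u_t\in C(Q_{s,T})$ is stated only on the open set, so if the maximum of $v$ falls at $t_0=T$ one cannot evaluate $v_t$ there directly; the usual remedy is to run the argument on $Q_{s,T'}$ for $T'<T$ and pass to the limit, which you should mention explicitly. With these cosmetic fixes the proof is complete.
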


\begin{prop}\label{PropnonegML3}
Let $\al\in (0,1)$ be. Then the function $\sigma_\alpha$ defined in \eqref{defsigma} is a non-negative function in $\bbR^+$.
\end{prop}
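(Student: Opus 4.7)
The plan is to argue by contradiction using Theorem \ref{lemaKaRy}. First, from the series \eqref{eq10} one reads off $\sigma_\alpha(z)=z^{\alpha-1}/\Gamma(\alpha)+O(z^{2\alpha+1})$ as $z\to 0^+$, so $\sigma_\alpha>0$ on some initial interval. If $\sigma_\alpha$ were not non-negative on $\bbR^+$, by continuity there would exist a smallest zero $z_0>0$ of $\sigma_\alpha$ together with some $\delta>0$ such that $\sigma_\alpha<0$ on $(z_0,z_0+\delta)$. Equivalently, the primitive $F(z):=\int_0^z\sigma_\alpha(w)\,\dd w$ would then have a strict local maximum at $z_0$, with $F(z_0)>F(z_0+\delta)$.

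Given such $z_0$ and $\delta$, I would fix any $\tau>0$ and build the self-similar solution
$$u(x,t):=F\bigl(x/(t+\tau)^{1/(1+\alpha)}\bigr)$$
of \eqref{FDE-q-Cap}, using Proposition \ref{solgeneral} together with the time-translation invariance $t\mapsto t+\tau$ of the equation. The regularity of $u$ required by Theorem \ref{lemaKaRy} follows from the series \eqref{eq10}, whose uniform convergence on compacts away from $z=0$ justifies term-by-term differentiation. I then apply the theorem on the region $Q_{s,T}$ with $s(t):=(z_0+\delta)(t+\tau)^{1/(1+\alpha)}$, which satisfies $s(0)=(z_0+\delta)\tau^{1/(1+\alpha)}>0$ and $0<\dot s(t)\leq\dot s(0)$ on $[0,T]$, matching the hypotheses of Theorem \ref{lemaKaRy}. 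By direct substitution, $u$ attains its supremum $F(z_0)$ along the entire interior curve $\Sigma:=\{(z_0(t+\tau)^{1/(1+\alpha)},t):\,0<t\leq T\}\subset Q_{s,T}$, whereas on the parabolic boundary $\partial\gamma_{s,T}$ one has $u<F(z_0)$ everywhere except at the single point $(z_0\tau^{1/(1+\alpha)},0)\in\gamma_2$.

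The main obstacle is extracting a genuine contradiction from this configuration, because Theorem \ref{lemaKaRy} only guarantees that the maximum is attained somewhere on $\partial\gamma_{s,T}$, a condition that is already compatible with the above. I would close the argument through a strong-maximum-principle statement: any solution of \eqref{FDE-q-Cap} attaining its maximum at an interior point of $Q_{s,T}$ must be constant, so that the attainment of $F(z_0)$ along the entire curve $\Sigma$ forces $u\equiv F(z_0)$, an absurdity. A possible route to such a strong principle---without importing further external results---is to perturb by $u\pm\varepsilon t$: since $\,^C_0D_x^\alpha$ annihilates functions that depend only on $t$, these perturbations solve the diffusion equation with a strictly signed source $\pm\varepsilon$, and successive applications of Theorem \ref{lemaKaRy} followed by the limit $\varepsilon\searrow 0$ would supply the strict inequality needed to collapse $u$ onto a constant. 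The resulting contradiction then delivers $\sigma_\alpha\geq 0$ on all of $\bbR^+$.
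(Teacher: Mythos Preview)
Your setup matches the paper's almost exactly: the same time-shifted self-similar solution on the moving domain $s(t)=(z_0+\delta)(t+\tau)^{1/(1+\alpha)}$, and the same observation that the supremum $F(z_0)$ is attained along the whole interior curve $\Sigma$ while the parabolic boundary touches that value only at the single point $(z_0\tau^{1/(1+\alpha)},0)\in\gamma_2$. You are also right that the weak principle alone does not yet give a contradiction.

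The gap is in your closing step. The perturbation $u\pm\varepsilon t$ does \emph{not} yield a strong maximum principle; that device only upgrades a strict-sign hypothesis on the source to a non-strict one. Concretely: for $u-\varepsilon t$ one has $f=-\varepsilon<0$ and Theorem~\ref{lemaKaRy} applies to the maximum, but along $\Sigma$ the value is $F(z_0)-\varepsilon t$, still maximized at $t=0$, i.e.\ on $\gamma_2$---no contradiction. For $u+\varepsilon t$ the values along $\Sigma$ do increase with $t$, but now $f=+\varepsilon>0$ and Theorem~\ref{lemaKaRy} controls only the \emph{minimum}. Either way the argument stalls, and there is no evident mechanism by which iterating this and sending $\varepsilon\to 0$ would force $u$ to be constant.

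The paper closes the argument with a different perturbation: it adds $\kappa v$, where
\[
v(x,t)=\frac{x^{1+\alpha}}{1+\alpha}+\Gamma(1+\alpha)\,t
\]
is an explicit solution of the \emph{homogeneous} equation, strictly increasing in both $x$ and $t$. Then $w:=u+\kappa v$ still has $f=0$, so Theorem~\ref{lemaKaRy} places its maximum on $\partial\gamma_{s,T}$. But the $t$-growth of $v$ makes $w$ along $\Sigma$ strictly increasing in $t$, so for suitable $t_0>0$ one gets $w(\xi_{t_0},t_0)>\max_{\gamma_2}w$; the $x$-growth of $v$ rules out $\gamma_1$; and choosing $\kappa$ small enough (using $F(z_0)>F(z_0+\delta)$) rules out $\gamma_3$. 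This yields the desired contradiction. The dependence of $v$ on $x$ is essential---both to exclude $\gamma_1$ and to keep $w$ a solution of the homogeneous equation rather than one with a signed source---and is precisely what your $\varepsilon t$ perturbation lacks.
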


\begin{proof}
 We know that $\sigma_\alpha(0^+)=+\infty$ and that $\sigma_\alpha\in C^1(\bbR^+)$. Suppose that there exists $z_0>0$ such that $\sigma_\alpha(z_0)<0$. Then we can affirm that there exists a ``first value'' $c>0$ for which $\sigma_\alpha(c)=0$. Also, from  \cite[Lemma 5.2]{GoKi-Libro} we know that the complex variable Mittag-Leffler function $E_{\al, 1+\frac{1}{\al},1}(z)$ is an entire function, then it has isolated roots and we can choose a sufficiently small $\delta>0$ such that 
 $\sigma_\alpha(z)\geq 0$ for $z\in (0,c]$,  $\sigma_\alpha(z)<0$ for $z\in (c,c+\delta]$ and
\begin{equation}\label{positsigma}
\displaystyle \int_0^{c+\delta}  \sigma_\alpha(w)\dd w> 0.
\end{equation}
Now, let $0<\varepsilon<1$ be, an consider the functions $s^{\varepsilon,\delta}$ and $u^\varepsilon$   defined by  
\begin{equation}\label{s-aux}s^{\varepsilon,\delta} (t)=(c+\delta)(t+\varepsilon)^\frac{1}{1+\alpha},\quad t\geq 0, \quad \text{ for } \, t\in (0,T)\end{equation}
and

\begin{equation}\label{u-aux}u^\varepsilon(x,t)=u(x,t+\varepsilon)=\frac{\displaystyle\int_0^{x/(t+\varepsilon)^{1/(1+\al)}}  \sigma_\alpha(w)\dd w}{\displaystyle\int_0^{c+\delta}  \sigma_\alpha(w)\dd w} \quad \text{ for } \, 0<x<s^{\varepsilon, \delta}(t), 0<t<T,  \end{equation}
where $u$ is the function defined in \eqref{solu} for $A=0$ and $B=\frac{1}{\displaystyle\int_0^{c+\delta}  \sigma_\alpha(w)\dd w}$. Then, if we define the region 
$$Q_{s^{\varepsilon,\delta},T}=\{(x,t): 0<x<s^{\varepsilon,\delta}(t), 0<t<T\},$$
and its parabolic boundary 
$$\partial\gamma_{s^{\varepsilon,\delta},T}=\gamma_1 \cup \gamma_2 \cup \gamma_3,$$
where $\gamma_1=\{(0,t): 0\leq t \leq T\},$, $\gamma_2=\{(x,0): 0\leq x \leq s^{\varepsilon,\delta}(0)=(c+\delta)\varepsilon^\frac{1}{1+\alpha}\}$ and  $\gamma_3=\{(s^{\varepsilon,\delta}(t),t): 0\leq t \leq T\}$ (see Figure \ref{Fig1}), it results that $u^\varepsilon$ is a solution to the moving-boundary problem
\begin{equation}\label{Aux-Prob}
 \begin{array}{lll}
(i) &u_t - \frac{\partial}{\partial x}\,^C_0D_x^\alpha u (x,t)=0, & 0<x<s^{\varepsilon,\delta}(t),0<t<T,\\
(ii) & u(0,t)=0, &  0<t<T,\\
(iii) & u(s^{\varepsilon,\delta}(t),t)=1, &  0<t<T,\\
%u(x,0)=u_0(x), & \text{ para } 0<x<s(0)=b,\\
(iv) & u(x,0)=\frac{\displaystyle\int_0^{x/\varepsilon^{1/(1+\al)}}  \sigma_\alpha(w)\dd w}{\displaystyle\int_0^{c+\delta}\sigma_\alpha(w)\dd w}\geq 0, &   0\leq x\leq s^{\varepsilon, \delta}(0)=(c+\delta)\varepsilon^\frac{1}{1+\alpha}>0.
\end{array}
\end{equation}

\begin{figure}[h!]
\centering
\includegraphics[scale=0.2]{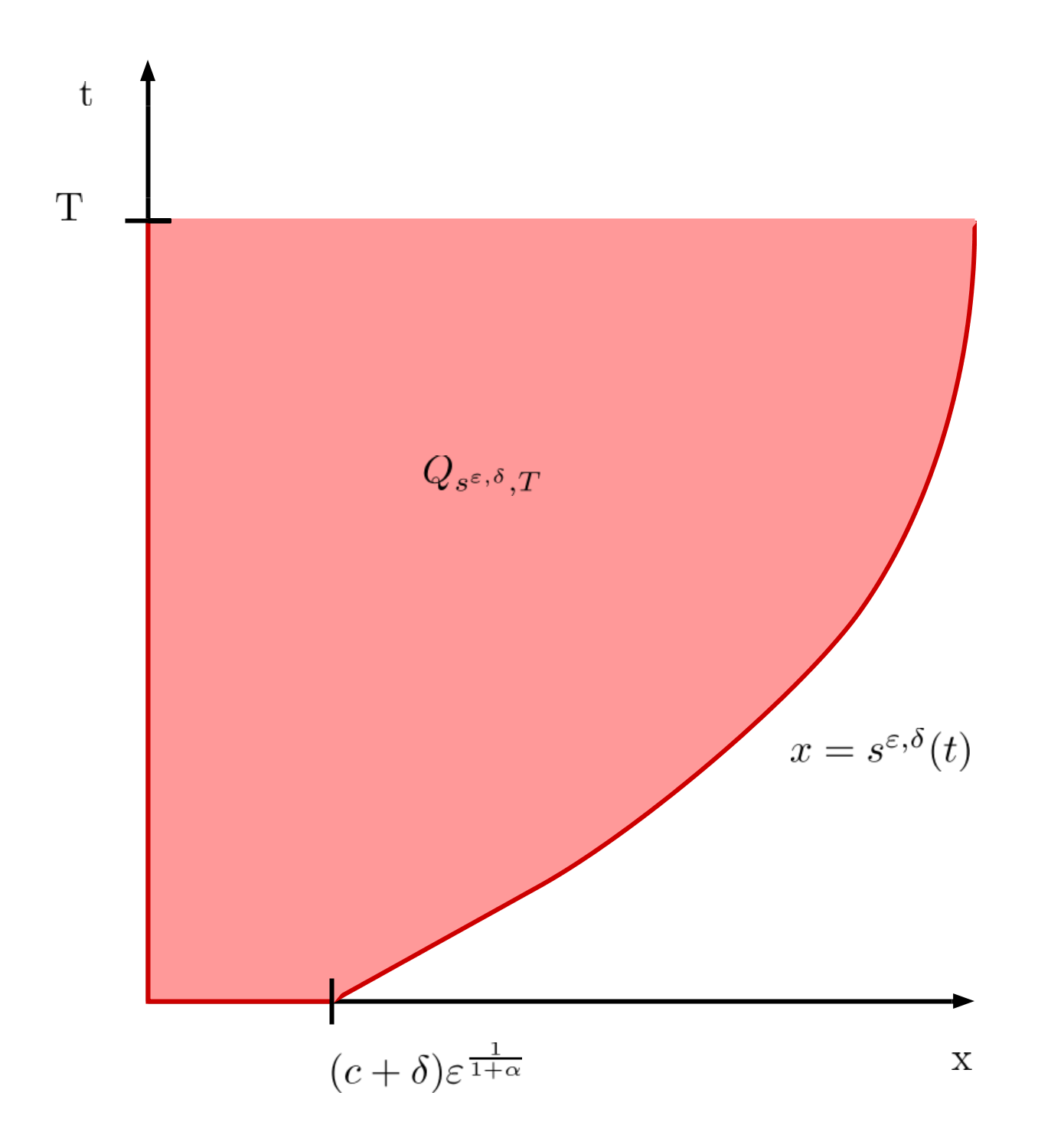}
\caption{Region $Q_{s^{\varepsilon,\delta},T}$ and its parabolic boundary.}
\label{Fig1}
\end{figure}

Clearly, $u^\varepsilon \in C(\overline{Q_{s^{\varepsilon,\delta},T}}), \frac{\partial}{\partial t}u^\varepsilon \in C(Q_{s^{\varepsilon,\delta},T})$, and for some $\beta\in (\alpha,1)$, we have $u^\varepsilon(\cdot,t)\in W^{2,\frac{1}{1-\beta}}(\delta,\omega)$, for $t\in (0,T), 0<\delta<\omega<s^{\varepsilon,\delta}(t)$.

Then, by Theorem \ref{lemaKaRy} (or \cite[Lemma 6]{Rys:2020}), it results that $u^\varepsilon$ attains its mimimum and its maximum at the parabolic boundary  $\partial\gamma_{s^{\varepsilon,\delta},T}$. Hence it easily straightforward that  $u^\varepsilon(x,t)\geq 0$ for all $(x,t)\in Q_{s^{\varepsilon,\delta},T}$. 

Next, we analyze the behavior of $u^\varepsilon$ at the parabolic boundary $\partial\gamma_{s^{\varepsilon,\delta},T}$.
Let $f_t(x):=u^\varepsilon (x,t)$ be. Thus,
$$f'_t(x)=\frac{\sigma_\alpha\left(\frac{x}{(t+\varepsilon)^{1/(1+\al)}}\right)}{(t+\varepsilon)^{1/(1+\al)}\displaystyle\int_0^{c+\delta}  \sigma_\alpha(w)\dd w}$$
\noindent being  $f'_t(x)\geq 0$ in $(0,c(t+\varepsilon)^{1/(1+\al)})$ and $f'_t(x)< 0$ in $(c(t+\varepsilon)^{1/(1+\al)},(c+\delta)(t+\varepsilon)^{1/(1+\al)})$. We conclude then that $f_t$ attains its  maximum over $[0,(c+\delta)(t+\varepsilon)^{1/(1+\al)}]=[0,s^{\varepsilon,\delta}(t)]$ at the point $x=c(t+\varepsilon)^{1/(1+\al)}$. Moreover, $f_t(c(t+\varepsilon)^{1/(1+\al)})>f_t(s^{\varepsilon,\delta}(t))$.

%Moreover, $(c(t+\varepsilon)^{1/1+\al},t)\in Q_{s^{\varepsilon,\delta},T}$ for all $t\in (0,T)$.

Besides, if we denote by $\xi_t=c(t+\varepsilon)^{1/1+\al}$, for every $t\in [0,T]$, we can state that 
$$f_t(\xi_t)=u^\varepsilon (\xi_t,t)=\frac{\displaystyle\int_0^c  \sigma_\alpha(w)\dd w}{\displaystyle\int_0^{c+\delta}  \sigma_\alpha(w)\dd w}=u^\varepsilon (\xi_{t'},t')=f_{t'}(\xi_{t'}),\quad \forall t,t'\in [0,T],$$
and then, 
%$u^\varepsilon (x,t)\leq u^\varepsilon (\xi_{t_0},t_0)$ for all $t_0\in [0,T], (x,t)\in \overline{Q_{s^{\varepsilon,\delta},T}}$ and we deduce that 
$u^\varepsilon$ attains its maximum on $\overline{Q_{s^{\varepsilon,\delta},T}}$ at every point $(\xi_t,t)$, for all $t\in [0,T]$. In particular, $u^\varepsilon$ attains its maximum  at $(\xi_0,0)\in \partial\gamma_{s^{\varepsilon,\delta},T}$. 
Denote by $A:=u^\varepsilon(\xi_t,t)$, for every $t\in [0,T]$. Note that  $A=u^\varepsilon(\xi_t,t)>u^\varepsilon((s^{\varepsilon,\delta}(t),t)=1,\forall t\leq T$.\\

Let us consider now the function
$$v(x,t)=\frac{x^{1+\al}}{1+\al}+\Gamma(1+\al)t$$
and define 

\begin{equation}w^\varepsilon_\kappa(x,t)=u^\varepsilon (x,t)+\kappa v(x,t)\end{equation}
where the constant $\kappa$ will be specified latter. Observe that $v$ and $w^\varepsilon_\kappa$ are both solutions to \eqref{Aux-Prob}$-(i)$, for every $\kappa>0$, and $w^\varepsilon_\kappa$ verifies the hypothesis of Theorem \ref{lemaKaRy}. Then, %$w^\varepsilon_\gamma$ attains its maximum over $\partial\Gamma_{s^{\varepsilon,\delta},T}$,
\begin{equation}\label{hip1}
\max_{\overline{Q_{s^{\varepsilon,\delta},T}}}w^\varepsilon_\kappa=\max_{\partial\gamma_{s^{\varepsilon,\delta},T}}w^\varepsilon_\kappa.
\end{equation}

Finally, let us make some computations in order to  evaluate $w^\varepsilon_\kappa$ at the parabolic boundary:
\begin{equation}\label{acotmax1}
\begin{split}
\max_{x\in [0,s^{\varepsilon,\delta}(0)]} w^\varepsilon_\kappa(x,0)&\leq \max_{x\in [0,s^{\varepsilon,\delta}(0)]} u^\varepsilon(x,0) +\kappa\max_{x\in [0,s^{\varepsilon,\delta}(0)]} v(x,0)\\
&=u^\varepsilon (\xi_0,0)+\kappa v(s^{\varepsilon,\delta}(0),0)\\
&=A+\kappa \frac{(c+\delta)^{1+\al}\varepsilon}{1+\al}.
\end{split}
\end{equation}
Also, we have that  
\begin{equation}\label{acotmax2}
\begin{split}
\max_{x\in [0,s^{\varepsilon,\delta}(t)]}w^\varepsilon_\kappa(x,t)&\geq  w^\varepsilon_\kappa(\xi_t,t)\\
%&= u^\varepsilon (\xi_t,t)+\kappa \left[\frac{c^{1+\al}(t+\varepsilon)}{1+\al}+\Gamma(1+\al)t\right]\\
&=A+\kappa \left[\frac{c^{1+\al}(t+\varepsilon)}{1+\al}+\Gamma(1+\al)t\right].
\end{split}
\end{equation}

Then, taking $t_0>\varepsilon\frac{(c+\delta)^{1+\al} -c^{1+\al}}{c^{1+\al}+\Gamma(2+\al)}>0$, it holds that 
$$\frac{c^{1+\al}(t_0+\varepsilon)}{1+\al}+\Gamma(1+\al)t_0> \frac{(c+\delta)^{1+\al}\varepsilon}{1+\al},$$
and therefore, taking into account \eqref{acotmax1} and \eqref{acotmax2} we get:
$$\max_{x\in [0,s^{\varepsilon,\delta}(t_0)]}w^\varepsilon_\kappa(x,t_0)>\max_{x\in [0,s^{\varepsilon,\delta}(0)]} w^\varepsilon_\kappa(x,0).$$
and we conclude that $w^\varepsilon_\gamma$ does not attains its maximum at $\gamma_2$.

On the other hand, $v(\cdot,t)$ is a strictly increasing function for every fixed $t$. Then, $w^\varepsilon_\kappa$ does not attains its maximum at $\gamma_1$.

Finally, asking $\kappa$ to verify that  $\kappa<\frac{(A-1)(\al +1)}{[(c+\delta)^{1+\al}-c^{1+\al}](T+\varepsilon)}$, we can affirm that 
\begin{equation}
\begin{split}
w^\varepsilon_\kappa(s^{\varepsilon,\delta}(t),t)&=1+\kappa\left[\frac{(c+\delta)^{1+\al}(t+\varepsilon)}{1+\al}+\Gamma(1+\al)t\right]\\
&<A+\kappa\left[\frac{c^{1+\al}(t+\varepsilon)}{1+\al}+\Gamma(1+\al)t\right]=w^\varepsilon_\kappa(\xi_t,t),\quad \forall t\leq T,
\end{split}
\end{equation}
from where we clame that $w^\varepsilon_\kappa$ does not attains its maximum at $\gamma_3$.

Therefore, $w^\varepsilon_\kappa$ does not attains its maximum at  the parabolic boundary $\partial\gamma_{s^{\varepsilon,\delta},T}$, which contradicts the equality \eqref{hip1}.

This contradiction comes from assuming that there exists $z_0>0$ such that $\sigma_\alpha(z_0)<0$.
Thus,
\begin{equation}\label{noneg3}
\sigma_\alpha(z)\geq 0,\quad \forall z>0,
\end{equation}
and the thesis holds.\end{proof}

\begin{coro} The three parametric Mittag-Leffler function involved in the kernel of the self-similar solution \eqref{solu} verifies that  
 \begin{equation}\label{ML>=0}
 E_{\alpha,1+\frac{1}{\alpha},1}\left(-\frac{x^{1+\alpha}}{1+\alpha}\right)\geq 0 \quad  \text{for all }\,  x>0.
\end{equation} 
\end{coro}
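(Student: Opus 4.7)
The plan is to derive this corollary as an immediate consequence of Proposition \ref{PropnonegML3}. Recall the definition $\sigma_\alpha(x)=x^{\alpha-1}E_{\alpha,1+\frac{1}{\alpha},1}\left(-\frac{x^{1+\alpha}}{1+\alpha}\right)$ given in \eqref{defsigma}. The factor $x^{\alpha-1}$ is the strictly positive real quantity $\exp((\alpha-1)\log x)$ for every $x>0$ and every $\alpha\in(0,1)$, so it cannot affect the sign of the product. Therefore, dividing both sides of the inequality $\sigma_\alpha(x)\geq 0$ (which is the content of Proposition \ref{PropnonegML3}) by $x^{\alpha-1}>0$ yields exactly $E_{\alpha,1+\frac{1}{\alpha},1}\left(-\frac{x^{1+\alpha}}{1+\alpha}\right)\geq 0$ for every $x>0$.

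If one prefers the non-negativity written in the more familiar form $E_{\alpha,1+\frac{1}{\alpha},1}(-y)\geq 0$ for all $y>0$, the change of variables $y=\frac{x^{1+\alpha}}{1+\alpha}$ is a strictly increasing bijection from $(0,\infty)$ onto $(0,\infty)$, so the two formulations are equivalent. There is no real obstacle in the argument, since all the work has been concentrated in the proof of Proposition \ref{PropnonegML3}; the corollary is essentially a cosmetic restatement obtained by peeling off the positive prefactor $x^{\alpha-1}$.
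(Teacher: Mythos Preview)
Your argument is correct and matches the paper's own justification: the corollary follows immediately from Proposition~\ref{PropnonegML3} by dividing the inequality $\sigma_\alpha(x)\geq 0$ by the strictly positive factor $x^{\alpha-1}$.
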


Note that, if $\sigma_\al(x):=x^{\al-1} E_{\alpha,1+\frac{1}{\alpha},1}\left(-\frac{x^{1+\alpha}}{1+\alpha}\right)\geq 0$ for all $x>0$, then \eqref{ML>=0} holds for all $x>0$.

\section{Explicit solution for the fractional space one-phase Stefan problem with a Dirichlet condition at the fixed face}

Let us return to problem \eqref{FSP-D} for a constant Dirichlet boundary data $g\equiv U_0$ and melting temperature $U_m$ such that $U_0>U_m$, given by the following free boundary problem:
\begin{equation}\label{FSP-D-new}
 \begin{array}{lll}
(i) & \frac{\partial}{\partial t}u(x,t)-\frac{\partial}{\partial x}\,^C_0D_x^\alpha u (x,t)=0, & 0<x<s(t),0<t<T,\\
(ii) & u(0,t)=U_0>U_m, &  0<t<T,\\
(iii) & u(s(t),t)=U_m, &  0<t<T,\\
(iv) & s(0)=0, &  \\
(v) & \dot{s}(t)=-(\,^C_0D_x^\alpha u)(s(t),t), &   0<t<T.
\end{array}
\end{equation}

Let $u$ be defined by \eqref{solu}. From \eqref{FSP-D-new}$-(ii)$, we deduce that $A=U_0.$ Now, from condition \eqref{FSP-D-new}$-(iii)$, we have
\begin{equation}\label{free1}
u(s(t),t)=U_0+B \int_0^{s(t)/t^{1/(1+\al)}}\sigma_\al(w)\dd w=U_m.
\end{equation}
Note that \eqref{free1} must be verified for all $t\in (0,T)$, then the free boundary $s$  must be proportional to $t^{1/1+\al}$, that is to say
\begin{equation}\label{freeb}
s(t)=\xi t^{\frac{1}{1+\alpha}},\quad \text{ for some } \xi\in \bbR^+, \quad t\, \in \, (0,T),
\end{equation}
which satisfies \eqref{FSP-D-new}$-(iv)$.
Replacing \eqref{freeb} in \eqref{free1} yields that
\begin{equation}\label{B} B= \frac{-(U_0-U_m)}{\int_0^\xi\sigma_\al(w)\dd w},
\end{equation}
where we have used inequality \eqref{noneg3}, the fact that $\sigma_\alpha$ is positive in a neiborhood of $0$ and that $\xi>0$.

Replacing \eqref{eqconddir} on \eqref{FSP-D-new}(v),  and deriving \eqref{freeb}, we have
\begin{equation}\label{condvD}
\begin{split}
\frac{\xi}{1+\alpha}t^{-\frac{\alpha}{1+\alpha}} &=-B\Gamma(\alpha)t^{-\frac{\alpha}{1+\alpha}}+B\frac{t^{-\frac{\alpha}{1+\alpha}}}{1+\alpha}\int_0^\xi w\sigma_\al(w)\dd w\\
&=\frac{Bt^{-\frac{\alpha}{1+\alpha}}}{1+\alpha}\left(-\Gamma(\alpha)(1+\alpha)+ \int_0^\xi w\sigma_\al(w)\dd w\right)
\end{split}
\end{equation}
Then, combining \eqref{B} and \eqref{condvD}, we have the following condition
\begin{equation}\label{igualdadxi}
\xi= \frac{(U_0-U_m)\left(\Gamma(\alpha)(1+\alpha)- \displaystyle\int_0^\xi w\sigma_\al(w)\dd w\right)}{ \displaystyle\int_0^\xi\sigma_\al(w)\dd w}.
\end{equation}
Therefore, we seek for a positive number $\xi$ which verifies the following equation
\begin{equation}
x= H(x),\quad x>0,
\end{equation}
where the function $H\colon \bbR_0^+\rightarrow \bbR$ is defined by the expression:
\begin{equation}\label{fcH}
H(x)= \frac{(U_0-U_m)\left(\Gamma(\alpha)(1+\alpha)-\displaystyle\int_0^x w\sigma_\al(w)\dd w\right)}{\displaystyle\int_0^x\sigma_\al(w)\dd w}.
\end{equation}
Observe that 
$$\lim\limits_{x\searrow 0} \int_0^x w\sigma_\al(w)\dd w=0^+,\quad \lim\limits_{x\searrow 0}\int_0^x \sigma_\al(w)\dd w=0^+.$$
Then $H(0^+)=\lim\limits_{x\searrow 0}H(x)=+\infty$, because $(U_0-U_m)\Gamma(\alpha)(1+\alpha)>0$. Moreover, it is easy to prove, by using Proposition \ref{PropnonegML3}, that $H$ is a non increasing function in $[0,+\infty)$.
Then, we can affirm that there exists unique $\xi>0$ such that $H(\xi)=\xi$.

From the preceding analysis,  the next theorem follows.

\begin{theo} An explicit solution for the fractional space one-phase Stefan-like problem \eqref{FSP-D-new} is given by 
\begin{equation}\label{solueq53}
u_\al(x,t)=U_0-\frac{(U_0-U_m)}{\displaystyle\int_0^{\xi_\al} \sigma_\al(w)\dd w} \int_0^{x/t^{1/(1+\al)}} \sigma_\al(w)\dd w.
\end{equation}
\begin{equation}\label{freebeq53}
s_\al(t)=\xi_\al t^{\frac{1}{1+\alpha}}, \quad t\, \in \, (0,T),
\end{equation}
where $\xi_\al \in \, \bbR^+$ is the unique solution to the equation
\begin{equation}\label{fcHeq53}
H_\al(x) = x, \quad x > 0,
\end{equation}
and the function $H_\al$ is defined by \eqref{fcH}.

\end{theo}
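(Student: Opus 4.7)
The plan is to verify each of the five conditions in \eqref{FSP-D-new} in turn for $(u_\al, s_\al)$ defined by \eqref{solueq53}--\eqref{freebeq53}, and then to establish existence and uniqueness of a positive fixed point $\xi_\al$ of $H_\al$. Most of the computations have already been carried out in the derivation preceding the statement, so the proof is essentially a matter of organizing them.

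First, I would identify $u_\al$ as the function \eqref{solu} with parameters $A=U_0$ and $B=-(U_0-U_m)/\int_0^{\xi_\al}\sigma_\al(w)\dd w$. By Proposition \ref{solgeneral}, this function solves the space-fractional diffusion equation \eqref{FSP-D-new}$(i)$ on $Q_{s_\al,T}$. Condition $(iv)$, $s_\al(0)=0$, is immediate from the ansatz \eqref{freebeq53}, and condition $(ii)$, $u_\al(0,t)=U_0$, is immediate from the vanishing of the integral at $x=0$. For condition $(iii)$, at $x=s_\al(t)$ the similarity variable $x/t^{1/(1+\al)}$ takes the value $\xi_\al$, so $u_\al(s_\al(t),t)=U_0+B\int_0^{\xi_\al}\sigma_\al(w)\dd w=U_0-(U_0-U_m)=U_m$ by the choice of $B$.

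For the Stefan condition $(v)$ I would compute $\dot s_\al(t)=\xi_\al t^{-\al/(1+\al)}/(1+\al)$ and use the closed form \eqref{eqconddir}, evaluated at $x=s_\al(t)$ so that the integration limit is $\xi_\al$, to write $-\,^{C}_0 D_x^{\al} u_\al(s_\al(t),t)$ in the form $t^{-\al/(1+\al)}\bigl[-B\G(\al)+B(1+\al)^{-1}\int_0^{\xi_\al} w\sigma_\al(w)\dd w\bigr]$. Equating the two expressions and cancelling the common factor $t^{-\al/(1+\al)}$, then inserting the formula for $B$, reduces $(v)$ to the scalar equation $\xi_\al=H_\al(\xi_\al)$.

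Finally, existence and uniqueness of $\xi_\al>0$ solving $H_\al(x)=x$ follow from a monotonicity argument. Proposition \ref{PropnonegML3} gives $\sigma_\al\geq 0$ on $\bbR^+$, and since $\sigma_\al(w)\sim w^{\al-1}$ near $w=0$ (with $\al\in(0,1)$), both integrals in \eqref{fcH} are finite and positive for each $x>0$, while the denominator tends to $0^+$ as $x\searrow 0$; thus $H_\al(0^+)=+\infty$. A quotient-rule computation, combined with the identity $x\int_0^x\sigma_\al\dd w-\int_0^x w\sigma_\al\dd w=\int_0^x(x-w)\sigma_\al(w)\dd w\geq 0$, yields $H_\al'(x)\leq 0$, so $H_\al$ is non-increasing on $(0,\infty)$. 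Hence $G(x):=x-H_\al(x)$ is continuous and strictly increasing on $(0,\infty)$ with $G(0^+)=-\infty$; and for any fixed $x_0>0$ one has $G(x)\geq x-H_\al(x_0)\to+\infty$ as $x\to\infty$, which produces a unique $\xi_\al>0$ with $G(\xi_\al)=0$. The main obstacle I expect is keeping the similarity-variable bookkeeping clean in the verification of $(v)$, since that is the only place where the full strength of \eqref{eqconddir} is used; every other step is forced by the ansatz together with the non-negativity of $\sigma_\al$ provided by Proposition \ref{PropnonegML3}.
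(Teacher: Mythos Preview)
Your proposal is correct and follows essentially the same route as the paper: identify $u_\al$ with the self-similar solution \eqref{solu} for suitable $A,B$, verify conditions $(i)$--$(iv)$ directly, reduce the Stefan condition $(v)$ via \eqref{eqconddir} to the scalar fixed-point equation $\xi=H_\al(\xi)$, and conclude existence and uniqueness from $H_\al(0^+)=+\infty$ together with monotonicity of $H_\al$ (using $\sigma_\al\ge 0$ from Proposition~\ref{PropnonegML3}). Your quotient-rule computation showing $H_\al'\le 0$ and the argument that $x-H_\al(x)$ is strictly increasing are slightly more explicit than the paper's one-line claim that ``$H$ is a non increasing function'', but the structure is the same.
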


\begin{remark}

If we take $\al=1$ in \eqref{FSP-D-new}, we recover the classical Lam\'e-Clapeyron-Stefan problem
\begin{equation}\label{FSP-D-new-al1}
\begin{array}{lll}
(i) & \frac{\partial}{\partial t}u(x,t)-\frac{\partial^2}{\partial x^2}u (x,t)=0, & 0<x<s(t),0<t<T,\\
(ii) & u(0,t)=U_0>U_m, &  0<t<T,\\
(iii) & u(s(t),t)=U_m, &  0<t<T,\\
(iv) & s(0)=0, &  \\
(v) & \dot{s}(t)=-\frac{\partial}{\partial x}u(s(t),t), &  0<t<T.
\end{array}
\end{equation}
By Remark \ref{ML3particulares} we know that,
\begin{equation}
\sigma_1(w)=w^0 E_{1,2,1}\left(-\frac{w^2}{2}\right)=e^{-\left(\frac{w}{2}\right)^2}.
\end{equation}
Then, the pair
\begin{equation}\label{solueq53al1}
u_1(x,t)=U_0-\frac{(U_0-U_m)}{\displaystyle\int_0^{\xi_1} \sigma_1(w)\dd w} \int_0^{x/t^{1/2}} \sigma_1(w)\dd w=U_0-\frac{(U_0-U_m)}{erf\left(\frac{\xi_1}{2}\right) }erf\left(\frac{x}{2\sqrt{t}}\right),
\end{equation}
\begin{equation}\label{freebeq53al1}
s_1(t)=\xi_1 t^{\frac{1}{2}}, \quad t\, \in \, (0,T),
\end{equation}
is a solution to \eqref{FSP-D-new-al1} where $\xi_1 \in \bbR^+$ is the unique solution to the equation
\begin{equation}\label{fcHeq53al1}
H_1(x) = x, \quad x > 0,
\end{equation}
with
$$H_1(x)=\frac{(U_0-U_m)\left(2-\displaystyle\int_0^x w e^{-w^2/4}\dd w\right)}{2\frac{\sqrt{\pi}}{2}erf\left(\frac{x}{2}\right)}=\frac{U_0-U_m}{\sqrt{\pi}}\frac{2e^{-\left(\frac{x}{2}\right)^2}}{erf\left(\frac{x}{2}\right)}.$$

That is, we have recovered the classical Lam\'e-Clapeyron-Stefan solution to problem \eqref{FSP-D-new-al1} given in \cite{Lame:1831}. 
\end{remark}

\section{Explicit solution for the fractional space one-phase Stefan problem with a Neumann condition at the fixed face}

Now, we consider the problem \eqref{FSP-N} for a heat flux boundary data given by $h(t)=g_0 t^{-\frac{\al}{1+\al}}$ and melting temperature $g_m$ such that $g_0>g_m$.

\begin{equation}\label{FSP-N-new}
 \begin{array}{lll}
(i) & \frac{\partial}{\partial t}v(x,t)-\frac{\partial}{\partial x}\,^C_0D_x^\alpha v (x,t)=0, & 0<x<s(t),0<t<T,\\
(ii) & \lim\limits_{x\rightarrow 0^+}
\,^C_0D^\al_x v(x,t)=-g_0 t^{-\frac{\al}{1+\al}}, & 0<t<T,\\
(iii) & v(s(t),t)=g_m, &  0<t<T,\\
(iv) & s(0)=0, & \\
(v) & \dot{s}(t)=-(\,^C_0D_x^\alpha v)(s(t),t), &   0<t<T.
\end{array}
\end{equation}

Let $v$ be defined by \eqref{solu}. From \eqref{FSP-N-new}$-(ii)$ and \eqref{DCu2}, we deduce that $B=-\frac{g_0}{\Gamma(\al)}$, because
\begin{equation}
\lim\limits_{x\rightarrow 0^+}\,^C_0D_{x}^{\al} v(x,t)=\lim\limits_{x\rightarrow 0^+} B\left(\frac{\Gamma(\alpha)}{t^{\frac{\alpha}{1+\alpha}}}-\sum\limits_{n=1}^\infty\frac{ c_{n-1} (-1)^{n-1} }{n(1+\alpha)^{n+1}}\frac{x^{n(1+\alpha)}}{t^{n+1-\frac{1}{1+\alpha}}}\right)=\frac{B\Gamma(\alpha)}{t^{\frac{\alpha}{1+\alpha}}}
\end{equation}

From condition \eqref{FSP-N-new}$-(iii)$, we have,
\begin{equation}\label{free1Nnew}
v(s(t),t)=A-\frac{g_0}{\Gamma(\al)}\int_0^{s(t)/t^{1/(1+\al)}}\sigma_\al(w)\dd w=g_m.\end{equation}

from where, we will ask again the free boundary $s$  to be proportional to $t^{1/1+\al}$,
\begin{equation}\label{freeb2}
s(t)=\eta t^{\frac{1}{1+\alpha}},\quad \text{ for some } \eta\in \bbR, \quad t\, \in \, (0,T).
\end{equation}
Then
\begin{equation}\label{A}
A=g_m+\frac{g_0}{\Gamma(\al)}\int_0^\eta \sigma_\al(w)\dd w
\end{equation}

From condition \eqref{FSP-N-new}(v), \eqref{eqconddir} and \eqref{freeb2}, we have

\begin{equation}
\begin{split}
 \frac{\eta}{1+\alpha}t^{-\frac{\alpha}{1+\alpha}} &=g_0 t^{-\frac{\alpha}{1+\alpha}}-\frac{g_0}{\Gamma(\al)}\frac{t^{-\frac{\alpha}{1+\alpha}}}{1+\alpha}\int_0^\eta w\sigma_\al(w)\dd w\\
&=g_0 t^{-\frac{\alpha}{1+\alpha}}\left(1-\frac{1}{(1+\alpha)\G(\al)}\int_0^\eta w\sigma_\al(w)\dd w\right),
\end{split}
\end{equation}
or equivalently
\begin{equation}\label{igualdadxi2}
\begin{split}
\eta =g_0\left((1+\alpha)-\frac{1}{\Gamma(\alpha)}\int_0^\eta w\sigma_\al(w)\dd w\right),
\end{split}
\end{equation}
Therefore, $\eta$ must verify the following equation
\begin{equation}
\begin{split}
x =G(x), \,\, x>0,
\end{split}
\end{equation}
where the function $G$ is defined in $\bbR^+_0$ by the expression:
\begin{equation}\label{fcH2}
G(x)=g_0\left((1+\alpha)-\frac{1}{\Gamma(\alpha)}\int_0^x w\sigma_\al(w)\dd w\right).
\end{equation}
Observe that $G$ is continuous in $[0,+\infty)$. From Proposition \ref{PropnonegML3}, it easily follows that $G$ is an decreasing function. Moreover
$$G(0)=g_0(1+\alpha)>0.$$

From the preceding analysis, we conclude that there exists a unique $\eta\in\bbR^+$ such that $\eta=G(\eta)$, and the next theorem follows.

\begin{theo} An explicit solution for the space-fractional Stefan-like problem \eqref{FSP-N-new} is given by 
\begin{equation}\label{solFSP-N-new}
\begin{split}
v_\al(x,t)&=g_m+\frac{g_0}{\Gamma(\alpha)}\int_0^{\eta_\al} w^{\alpha-1}E_{\alpha,1+\frac{1}{\alpha},1}\left(-\frac{w^{1+\alpha}}{1+\alpha}\right)\dd w-\frac{g_0}{\Gamma(\alpha)}\int_0^{x/t^{1/(1+\al)}} w^{\alpha-1}E_{\alpha,1+\frac{1}{\alpha},1}\left(-\frac{w^{1+\alpha}}{1+\alpha}\right)\dd w\\
&=g_m+\frac{g_0}{\Gamma(\alpha)} \int_{x/t^{1/(1+\al)}}^{\eta_\al} w^{\alpha-1}E_{\alpha,1+\frac{1}{\alpha},1}\left(-\frac{w^{1+\alpha}}{1+\alpha}\right)\dd w\\
\end{split}
\end{equation}
\begin{equation}\label{freebP2}
s_\al(t)=\eta_\al t^{\frac{1}{1+\alpha}},\quad t\, \in \, (0,T).
\end{equation}
where $\eta_\al\in \bbR^+$ is the unique solution to the equation
$$G_\al(x) = x, \quad x > 0,$$
and the function $G_\al$ is defined by \eqref{fcH2}

\end{theo}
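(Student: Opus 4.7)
The plan is to verify directly that the pair $(v_\al, s_\al)$ given by \eqref{solFSP-N-new}--\eqref{freebP2} satisfies each of the five conditions of \eqref{FSP-N-new}, and then to establish existence and uniqueness of $\eta_\al>0$ solving $G_\al(x)=x$. Most of the machinery has already been built up in the preceding section: the form of $v_\al$ is an instance of \eqref{solu} with specific constants
\[
A=g_m+\frac{g_0}{\G(\al)}\int_0^{\eta_\al}\sigma_\al(w)\,\dd w,\qquad B=-\frac{g_0}{\G(\al)},
\]
so I will leverage Propositions \ref{solgeneral} and \ref{PropnonegML3} together with formulas \eqref{DCu2} and \eqref{eqconddir}.

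First I would dispatch conditions $(i)$, $(iii)$, $(iv)$. By Proposition \ref{solgeneral}, any function of the form \eqref{solu} solves the space fractional diffusion equation, so \eqref{FSP-N-new}$-(i)$ holds. Condition \eqref{FSP-N-new}$-(iii)$ is immediate from the second form in \eqref{solFSP-N-new}: when $x=s_\al(t)=\eta_\al t^{1/(1+\al)}$ the integrand's lower and upper limits coincide, giving $v_\al(s_\al(t),t)=g_m$. The initial condition $s_\al(0)=0$ is trivial from \eqref{freebP2}.

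Next I would verify the Neumann condition at the fixed face. From the series representation \eqref{DCu2} with our choice of $B$, every term with $n\geq 1$ contains a factor $x^{n(1+\al)}$ and therefore vanishes as $x\to 0^+$, leaving the $n=0$ term $B\G(\al)/t^{\al/(1+\al)}=-g_0 t^{-\al/(1+\al)}$, which is exactly \eqref{FSP-N-new}$-(ii)$. For the Stefan condition $(v)$ I would evaluate the closed expression \eqref{eqconddir} at $x=s_\al(t)$, which, after substituting $B=-g_0/\G(\al)$ and computing $\dot s_\al(t)=\tfrac{\eta_\al}{1+\al}t^{-\al/(1+\al)}$, reduces the required equality to
\[
\frac{\eta_\al}{1+\al}=g_0\left(1-\frac{1}{(1+\al)\G(\al)}\int_0^{\eta_\al} w\sigma_\al(w)\,\dd w\right),
\]
which is precisely $\eta_\al=G_\al(\eta_\al)$.

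It remains to prove that such $\eta_\al$ exists and is unique. The function $G_\al$ is continuous on $[0,\infty)$ with $G_\al(0)=g_0(1+\al)>0$, and by Proposition \ref{PropnonegML3} the integrand $w\sigma_\al(w)$ is non-negative on $\bbR^+$, so $G_\al$ is non-increasing. Since $y=x$ is strictly increasing and starts below $G_\al$ at the origin, the two curves intersect at exactly one point $\eta_\al>0$. The main obstacle, technically speaking, is ensuring the termwise manipulations of the Mittag-Leffler series in the neighborhood of $x=0^+$ for the Neumann condition are rigorous; this is controlled by the uniform convergence of \eqref{DCu2} on compact subsets of $\bbR^+_0\times(0,T)$ away from $t=0$, already noted in the remark following Proposition \ref{solgeneral}. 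Everything else is bookkeeping tied to the identities already established.
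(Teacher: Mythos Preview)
Your proposal is correct and follows essentially the same approach as the paper: both rely on Proposition~\ref{solgeneral} for the PDE, formula~\eqref{DCu2} for the Neumann condition at $x=0$, formula~\eqref{eqconddir} for the Stefan condition, and Proposition~\ref{PropnonegML3} to establish that $G_\al$ is non-increasing, yielding existence and uniqueness of $\eta_\al$. The only cosmetic difference is that the paper \emph{derives} the constants $A$, $B$ and the form of $s_\al$ by imposing the boundary conditions sequentially, whereas you \emph{verify} the already-stated solution; the underlying computations are identical.
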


\begin{remark}

For $\al=1$ in \eqref{FSP-N-new}, we have

\begin{equation}\label{FSP-N-new-al1}
 \begin{array}{lll}
(i) & \frac{\partial}{\partial t}v(x,t)-\frac{\partial^2}{\partial x^2}v (x,t)=0, & 0<x<s(t),0<t<T,\\
(ii) & \lim\limits_{x\rightarrow 0^+}\frac{\partial}{\partial x}v(x,t)=-g_0t^{-\frac{1}{2}}, &  0<t<T,\\
(iii) & v(s(t),t)=g_m, &  0<t<T,\\
(iv) & s(0)=0, &  \\
(v) & \dot{s}(t)=-\frac{\partial}{\partial x}v(s(t),t), &   0<t<T,
\end{array}
\end{equation}
and
\begin{equation}
\sigma_1(w)=w^0 E_{1,2,1}\left(-\frac{w^2}{2}\right)=e^{-\left(\frac{w}{2}\right)^2}.
\end{equation}
Then, the pair
\begin{equation}\label{2solueq53al1}
v_1(x,t)=g_m+g_0 \int_{x/t^\frac{1}{2}}^{\eta_1} w^0 E_{1,2,1}\left(-\frac{w^2}{2}\right)\dd w=g_m+g_0\sqrt{\pi} \left[erf\left(\frac{\eta_1}{2}\right)-erf\left(\frac{x}{2\sqrt{t}}\right)\right],
\end{equation}
\begin{equation}\label{2freebeq53al1}
s_1(t)=\eta_1 t^{\frac{1}{2}},\quad t\, \in \, (0,T)
\end{equation}
is a solution to the classical Lam\'e-Clapeyron-Stefan problem \eqref{FSP-N-new-al1}, where $\eta_1 \in \bbR^+$ is the unique solution to the equation
\begin{equation}\label{2fcHeq53al1}
G_1(x) = x, \quad x > 0,
\end{equation}
with
$$G_1(x)=g_0\left(2-\int_0^x w\sigma_1(w)\dd w\right)=2g_0 e^{-\left(\frac{x}{2}\right)^2}.$$

as can be stated in \cite{Tar:1981,Tarzia}.
\end{remark}

\begin{remark}
For the solution \eqref{solFSP-N-new} to the space-fractional Stefan-like problem \eqref{FSP-N-new}, we cannot change the condition \eqref{FSP-N-new}(ii) by a Neuman condition of the form $v_x(0+,t)=g(t)$. In fact, observe that
\begin{equation}\label{derivux}
v_x(x,t)=-g_0\frac{x^{\alpha-1}}{t^{\frac{\alpha}{1+\alpha}}}\sum\limits_{k=0}^\infty c_k\left(\frac{ -x^{1+\alpha} }{(1+\alpha)t}\right)^k,
\end{equation}
and $c_k <2$ for all $k$. Then, the series in the right hand of \eqref{derivux} is convergent for $x<1$. Moreover, for $x=0$, the series is equal to $1$. Hence, since $\alpha -1<0$, we conclude that
$$\lim\limits_{x\rightarrow 0^+}v_x(x,t) =-\infty.$$

\end{remark}

\section{Conclusions}

We obtained exact self-similarity solutions for a one-phase one-dimensional fractional space Stefan problem in terms of the three parametric Mittag-Leffler function $E_{\al,m,l}(z)$. We considered Dirichlet and Newmann boundary conditions at the fixed face, involving  Caputo fractional space derivatives of order $0<\al<1$. In both cases, the free boundary term is proportional to $t^\frac{1}{1+\al}$.
Finally, we recover the solution for the classical  one-phase Stefan problem  when the order of the Caputo derivatives approaches one.

\section{Acknowledgements}

\noindent The present work has been sponsored by the Projects PIP N$^\circ$ 0275 from CONICET--Universidad Austral, ANPCyT PICTO Austral 2016 N$^\circ 0090$, Austral N$^\circ 006-$INV$00020$ (Rosario, Argentina) and European Unions Horizon 2020 research and innovation programme under the Marie Sklodowska-Curie Grant Agreement N$^\circ$ 823731 CONMECH.

\bibliographystyle{plain}

\bibliography{Roscani_BIBLIO_GENERAL_nombres_largos_2020}

\begin{thebibliography}{10}

\bibitem{BaMe:2018}
B.~Baeumer, M.~Kovács, M.~Meerschaert, and H.~Sankaranarayanan.
\newblock Boundary conditions for fractional diffusion.
\newblock {\em Journal of Computational and Applied Mathematics}, 336:408 --
  424, 2018.

\bibitem{Cannon}
J.~R. Cannon.
\newblock {\em The {O}ne--{D}imensional {H}eat {E}quation}.
\newblock {A}ddison-{W}esley {P}ublishing {C}ompany, 1984.

\bibitem{delCasNe:2006}
D.~del Castillo-Negrete.
\newblock Fractional diffusion models of nonlocal transport.
\newblock {\em Physics of Plasmas}, 13:082308, 2006.

\bibitem{Diethelm}
K.~Diethelm.
\newblock {\em The {A}nalysis of {F}ractional {D}ifferential {E}quations: An
  application oriented exposition using differential operators of {C}aputo
  type}.
\newblock Springer Science \& Business Media, 2010.

\bibitem{Hilfer}
R.~Hilfer (Ed.).
\newblock {\em Applications of {F}ractional {C}alculus in {P}hysics}.
\newblock Word Scientific Publishing Co, 2000.

\bibitem{GoKi-Libro}
R.~Gorenflo, A.~Kilbas, F.~Mainardi, and S.~Rogosin.
\newblock {\em Mittag-Leffler Functions, Related Topics and Applications}.
\newblock Springer Publishing Company, Incorporated, 2014.

\bibitem{GeVoMiDa:2013}
C.~A. Gruber, C.~J. Vogl, M.~J. Miksis, and S.~H. Davis.
\newblock Anomalous diffusion models in the presence of a moving interface.
\newblock {\em Interfaces and {F}ree {B}oundaries}, 15:181--202, 2013.

\bibitem{KiSi:1996}
A.~A. Kilbas and M.~Saigo.
\newblock On {M}ittag-{L}effler type function, fractional calculus operators
  and solutions of integral equations.
\newblock {\em Integral Transforms and Special Functions}, 4(4):355--370, 1996.

\bibitem{KuRy:2020}
A.~Kubica and K.~Ryszewska.
\newblock A self-similar solution to time-fractional {S}tefan problem.
\newblock {\em arXiv:2006.10563v1}, 2020.

\bibitem{Lame:1831}
G.~Lam\'e and B.~P. Clapeyron.
\newblock M\'emoire sur la solidification par refroidissement d'un globe
  liquide.
\newblock {\em Annales de Chimie et de Physique 2$^\circ$ s\'erie},
  47:250--256, 1831.

\bibitem{FM-libro}
F.~Mainardi.
\newblock {\em Fractional {C}alculus and {W}aves in {L}inear
  {V}iscoelasticity}.
\newblock Imperial College Press, 2010.

\bibitem{LuMaPa-TheFundamentalSolution}
F.~Mainardi, Y.~Luchko, and G.~Pagnini.
\newblock The fundamental solution of the space--time fractional diffusion
  equation.
\newblock {\em Fractional Calculus $\&$ Applied Analysis}, 4(2):153--192, 2001.

\bibitem{MeKl:2000}
R.~Metzler and J.~Klafter.
\newblock The random walk's guide to anomalous diffusion: a fractional dynamics
  approach.
\newblock {\em Physics reports}, 339:1--77, 2000.

\bibitem{Pincho-Rubi}
Y.~Pinchover and J.~Rubinstein.
\newblock {\em An {I}ntroduction to {P}artial {D}ifferential {E}quations}.
\newblock Cambridge University Press, 2005.

\bibitem{Povstenko}
Y.~Povstenko.
\newblock {\em Linear {F}ractional {D}iffusion--wave {E}quation for
  {S}cientists and {E}ngineers}.
\newblock Springer, 2015.

\bibitem{Pskhu:2009}
A.~V. Pskhu.
\newblock The fundamental solution of a diffusion-wave equation of fractional
  order.
\newblock {\em Izvestiya: Mathematics}, 73(2):351--392, 2009.

\bibitem{RoSa:2013}
S.~Roscani and E.~Santillan~Marcus.
\newblock Two equivalent {S}tefan's problems for the time--fractional diffusion
  equation.
\newblock {\em Fractional Calculus $\&$ Applied Analysis}, 16(4):802--815,
  2013.

\bibitem{RoBoTa:2018}
S.~D. Roscani, J.~Bollati, and D.~A. Tarzia.
\newblock A new mathematical formulation for a {P}hase {C}hange {P}roblem with
  a memory flux.
\newblock {\em Chaos, Solitons and Fractals}, 116:340--347, 2018.

\bibitem{RoCaTa:2020}
S.~D. Roscani, N.~D. Caruso, and D.~A. Tarzia.
\newblock Explicit solutions to fractional {S}tefan-like problems for {C}aputo
  and {R}iemann–{L}iouville derivatives.
\newblock {\em Communications in Nonlinear Science and Numerical Simulation},
  90:105361, 2020.

\bibitem{RoTa:2017-TwoDifferent}
S.~D. Roscani and D.~A. Tarzia.
\newblock Two different fractional {S}tefan problems which are convergent to
  the same classical {S}tefan problem.
\newblock {\em Mathematical Methods in the Applied Sciences}, 41(6):6842--6850,
  2018.

\bibitem{Rys:2020}
K.~Ryszewska.
\newblock A space-fractional {S}tefan problem.
\newblock {\em Nonlinear Analysis}, 199:112027, 2020.

\bibitem{Samko}
S.~G. Samko, A.~A. Kilbas, and O.~I. Marichev.
\newblock {\em Fractional {I}ntegrals and {D}erivatives--{T}heory and
  {A}pplications}.
\newblock Gordon and Breach, 1993.

\bibitem{Tar:1981}
D.~A. Tarzia.
\newblock An inequality for the coeficient $\sigma$ of the free boundary
  $s(t)=2\sigma \sqrt{t}$ of the {N}eumann solution for the two-phase {S}tefan
  problem.
\newblock {\em Quart. Appl. Math.}, 39:491--497, 1981.

\bibitem{Tarzia}
D.~A. Tarzia.
\newblock {\em Explicit and Approximated Solutions for Heat and Mass Transfer
  Problems with a Moving Interface}, chapter 20, {A}dvanced {T}opics in {M}ass
  {T}ransfer, pages 439--484.
\newblock Prof. Mohamed El-Amin (Ed.), Intech, Rijeka, 2011.

\bibitem{Vo:2014}
V.~R. Voller.
\newblock Fractional {S}tefan problems.
\newblock {\em International Journal of Heat and Mass Transfer}, 74:269--277,
  2014.

\bibitem{VoFaGa:2013}
V.~R. Voller, F.~Falcini, and R.~Garra.
\newblock Fractional {S}tefan problems exhibing lumped and distributed
  latent--heat memory effects.
\newblock {\em Physical Review E}, 87:042401, 2013.

\end{thebibliography}

\end{document}